\author{Dimitri Ara}
\address{Dimitri Ara, Radboud Universiteit Nijmegen, Institute for
Mathematics, Astrophysics, and Particle Physics, Heyendaalseweg 135, 6525 AJ
Nijmegen, The Netherlands}
\email{d.ara@math.ru.nl}
\urladdr{http://www.math.ru.nl/~dara}
\keywords{$\infty$-groupoid, $\infty$-category, globular pasting scheme, cylinder}
\subjclass[2000]{18B40, 18C10, 18C30, \textbf{18D05}, \textbf{18G55},
55P10, \textbf{55P15}, \textbf{55Q05}, 55U35, \textbf{55U40}}
\title{Strict $\infty$-groupoids are Grothendieck \oo-groupoids}
\begin{document}

\begin{abstract}
We show that there exists a canonical functor from the category of strict
\oo-groupoids to the category of Grothendieck \oo-groupoids and that this
functor is fully faithful. As a main ingredient, we prove that free
strict \oo-groupoids on a globular pasting scheme are weakly contractible.
\end{abstract}

\maketitle

\section*{Introduction}

The purpose of this paper is to prove that strict \oo-groupoids are
Grothendieck \oo-groupoids or, more precisely, that there exists a canonical
fully faithful functor from the category of strict \oo-groupoids to the
category of Grothendieck \oo-groupoids (the morphisms of Grothendieck
\oo-groupoids we are considering in this article are the strict ones).

The notion of Grothendieck \oo-groupoid was introduced by Grothendieck in
his famous letter to Quillen (this letter constitutes the first thirteen
sections of \cite{GrothPS}). Roughly speaking, a Grothendieck \oo-groupoid
is an \oo-graph (or globular set) endowed with operations similar to the one
of strict \oo-groupoids, with coherences (making it a strict \oo-groupoid up
to these coherences), with coherences between these coherences, and so on.
Grothendieck explained in the letter how to construct an \oo-groupoid
$\Pi_\infty(X)$  out of a topological space $X$ and he conjectured that this
\oo-groupoid $\Pi_\infty(X)$ (up to some notion of weak equivalence)
classifies the homotopy type of $X$.  This conjecture is now often referred
to as the \emph{homotopy hypothesis}. A precise statement of the conjecture
is given at the very end of \cite{AraHomGr}. (It is well-known that strict
\oo-groupoids are not sufficient for this purpose. See for instance
\cite[Example 6.7]{BrownHigginsClass}, \cite[Chapter~4]{SimpsonHTHC} or
\cite{AraTypHomStr}.)

To define precisely Grothendieck \oo-groupoids, one has to give a
description of these higher coherences. It seems hopeless to describe them
explicitly. Even for $3$\nbd-groupoids, the explicit description of the
coherences (see~\cite{GPSTricat}) is very involved. Grothendieck's main
insight is that one can generate all the coherences by induction in a simple
way. An intuitive explanation of this inductive description is given in the
introductions of~\cite{AraThesis} and~\cite{AraHomGr}.

The definition of Grothendieck \oo-groupoids we use in this paper is not
exactly the original one (this original definition is explained in
\cite{MaltsiGr}). First, we use the simplification introduced by
Maltsiniotis in \cite{MaltsiCat} and \cite{MaltsiGrCat}. Second, we use the
slight modification we introduced in \cite{AraHomGr}. The purpose of this
modification was precisely to make canonical the inclusion functor from
strict \oo-groupoids to Grothendieck \oo-groupoids as announced in Remark
2.8.1 of loc.~cit.

\smallskip

Let us come back to the main statement of the paper. Consider for the moment
the following weaker statement: every strict \oo-groupoid can be endowed
with the structure of a Grothendieck \oo-groupoid. This statement might seem
tautological at first: it says, in some sense, that strict \oo-groupoids are
weak \oo-groupoids. This apparent paradox comes from the fact that the
coherences which are part of the algebraic structure of Grothendieck
\oo-groupoid are not explicitly defined (in the sense that they are
generated by induction) and it is thus not clear that strict \oo-groupoids
admit such coherences. Of course, if it was not the case, the notion of
Grothendieck \oo-groupoid would have to be corrected.

It turns out that this weaker statement is equivalent to the following
statement: free strict \oo-groupoids on a globular pasting scheme are
weakly contractible.

To prove the weak contractibility of these strict \oo-groupoids, we use the
following strategy. First, we reduce to the case of the free strict
\oo-groupoid on the $n$-disk by using the Brown-Golasi\'nski model category
structure on strict \oo-groupoids (see~\cite{BrownGolas}
and~\cite{AraMetBrown}). The case of the $n$-disk is then proved using the
path object of cylinders introduced by Métayer in \cite{MetPolRes} and
studied in details in \cite{LMW}. The idea of using these cylinders was
suggested to us by Yves Lafont.

This Grothendieck \oo-groupoid structure on a strict \oo-groupoid is not
unique if we stick to the Grothendieck's original definition. We show that,
if we use the modified definition we introduced in \cite{AraHomGr}, this
structure becomes unique. This shows that there exists a canonical functor
from the category of strict \oo-groupoids to the category of Grothendieck
\oo-groupoids, as announced. Finally, we show that this functor is fully
faithful.

\smallskip

Our paper is organized as follows. In the first section, we recall basic
definitions and facts about strict \oo-groupoids and their weak
equivalences. In the second section, we recall the definition of
Grothendieck \oo-groupoids and we state our main result. In particular, we
introduce the fundamental notions of globular extension, contractible
extension, globular presheaf and coherator. In the third section, we
introduce the globular extension~$\Thtld$, making the link between the world
of strict \oo-groupoids and the world of Grothendieck \oo-groupoids. We
explain the relation between the properties of $\Thtld$ and our main result.
The fourth section is dedicated to the homotopy theory of strict
\oo-groupoids.  In particular, we introduce the Brown-Golasi\'nski model
structure and Métayer's path object of cylinders.  The fifth section is the
technical heart of the article. We show that the free strict \oo-groupoid on
the $n$-disk (seen as an \oo-graph) is weakly contractible. To do so, we
exhibit an $n$-cylinder leading to a homotopy between the identity functor
of this \oo-groupoid and a constant functor. In the sixth section, we prove
that the globular extension $\Thtld$ is canonically contractible. This means
in particular that free strict \oo-groupoids on a globular pasting scheme
are weakly contractible. In the seventh section, we study conditions on a
globular extension to get a fully faithful functor from strict \oo-groupoids
to globular presheaves on this globular extension. Finally, in the last
section, we deduce from the previous sections the existence of a canonical
fully faithful functor from strict \oo-groupoids to Grothendieck
\oo-groupoids.

\smallskip

\begin{tparagr*}{Acknowledgement}
We would like to thank François Métayer, for the multiple helpful
conversations we had about the contractibility of the $n$-disk, and Yves
Lafont, for his suggestion of using the path object of cylinders to show the
contractibility of the $n$-disk.
\end{tparagr*}

\smallskip

\begin{tparagr*}{Notation}
If $C$ is a category, we will denote by $\pref{C}$ the category of
presheaves on~$C$. If 
\[
\xymatrix@C=1pc@R=1pc{
X_1 \ar[dr]_{f_1} & & X_2 \ar[dl]^{g_1} \ar[dr]_{f_2} & &  \cdots & & X_n
\ar[dl]^{g_{n-1}} \\
& Y_1 & & Y_2 & \cdots & Y_{n-1}
}
\]
is a diagram in $C$, we will denote by 
\[ (X_1, f_1) \times_{Y_1} (g_1, X_2, f_2) \times_{Y_2} \cdots
\times_{Y_{n-1}} (g_{n-1}, X_n) \]
its limit. Dually, we will denote by
\[ (X_1, f_1) \amalg_{Y_1} (g_1, X_2, f_2) \amalg_{Y_2} \cdots
\amalg_{Y_{n-1}} (g_{n-1}, X_n) \]
the colimit of the corresponding diagram in the opposite category.
\end{tparagr*}

\section{Strict $\infty$-groupoids and their weak equivalences}

\begin{tparagr}{The globe category}
We will denote by $\G$ the \ndef{globe category}, that is, the category
generated by the graph
\[
\xymatrix{
\Dn{0} \ar@<.6ex>[r]^-{\Ths{1}} \ar@<-.6ex>[r]_-{\Tht{1}} &
\Dn{1} \ar@<.6ex>[r]^-{\Ths{2}} \ar@<-.6ex>[r]_-{\Tht{2}} &
\cdots \ar@<.6ex>[r]^-{\Ths{i-1}} \ar@<-.6ex>[r]_-{\Tht{i-1}} &
\Dn{i-1} \ar@<.6ex>[r]^-{\Ths{i}} \ar@<-.6ex>[r]_-{\Tht{i}} &
\Dn{i} \ar@<.6ex>[r]^-{\Ths{i+1}} \ar@<-.6ex>[r]_-{\Tht{i+1}} &
\cdots
}
\]
under the coglobular relations
\[\Ths{i+1}\Ths{i} = \Tht{i+1}\Ths{i}\quad\text{and}\quad\Ths{i+1}\Tht{i} =
\Tht{i+1}\Tht{i}, \qquad i \ge 1.\]
For $i \ge j \ge 0$, we will denote by $\Ths[j]{i}$ and $\Tht[j]{i}$ the
morphisms from $\Dn{j}$ to $\Dn{i}$ defined by
\[\Ths[j]{i} = \Ths{i}\cdots\Ths{j+2}\Ths{j+1}\quad\text{and}\quad
  \Tht[j]{i} = \Tht{i}\cdots\Tht{j+2}\Tht{j+1}.\]
\end{tparagr}

\begin{tparagr}{Globular sets}
The category of \ndef{globular sets} or \ndef{\oo-graphs} is the category
$\pref{\G}$ of presheaves on $\G$. A globular set $X$ thus consists of a
diagram of sets
\[
\xymatrix{
\cdots \ar@<.6ex>[r]^-{\Gls{i+1}} \ar@<-.6ex>[r]_-{\Glt{i+1}} &
X_{i} \ar@<.6ex>[r]^-{\Gls{i}} \ar@<-.6ex>[r]_-{\Glt{i}} &
X_{i-1} \ar@<.6ex>[r]^-{\Gls{i-1}} \ar@<-.6ex>[r]_-{\Glt{i-1}} &
\cdots \ar@<.6ex>[r]^-{\Gls{2}} \ar@<-.6ex>[r]_-{\Glt{2}} &
X_1 \ar@<.6ex>[r]^-{\Gls{1}} \ar@<-.6ex>[r]_-{\Glt{1}} &
X_0
}
\]
satisfying the globular relations
\[\Gls{i}\Gls{i+1} = \Gls{i}\Glt{i+1}\quad\text{and}\quad\Glt{i}\Gls{i+1} =
\Glt{i}\Glt{i+1}, \qquad i \ge 1.\]
For $i \ge j \ge 0$, we will denote by $\Gls[j]{i}$ and $\Glt[j]{i}$ the
maps from $X_i$ to $X_j$ defined by
\[\Gls[j]{i} = \Gls{j+1}\cdots\Gls{i-1}\Gls{i}\quad\text{and}\quad
  \Glt[j]{i} = \Glt{j+1}\cdots\Glt{i-1}\Glt{i}.\]

If $X$ is a globular set, we will call $X_0$ the set of \ndef{objects} of
$X$ and $X_i$, for $i \ge 0$, the set of \ndef{$i$-arrows}. If $u$ is an
$i$-arrow of $X$ for $i \ge 1$, $\Gls{i}(u)$ (resp.~$\Glt{i}(u)$) will be
called the \ndef{source} (resp.~the \ndef{target}) of $u$. We will often
denote an arrow $u$ of $X$ whose source is $x$ and whose target is $y$ by $u
: x \to y$. We will say that two $n$-arrows $u$ and $v$ are \ndef{parallel}
if either $n = 0$, or $n \ge 1$ and $u, v$ have same source
and same target.
\end{tparagr}

\begin{tparagr}{Strict $\infty$-categories}
An \ndef{\oo-precategory} is a globular set $C$ endowed with maps
\[
  \begin{split}
  \comp_j^i & : \big(C_i, \Gls[j]{i}\big) \times_{C_j} (\Glt[j]{i}, C_i) \to 
      C_i,\quad i > j \ge 0,
      \\
  \Glk{i} & : C_i \to C_{i+1}, \quad i \ge 0,
  \end{split}
\]
such that
\begin{itemize}
    \item 
      for every $i > j \ge 0$ and every $(u, v)$ in $(C_i, \Gls[j]{i})
      \times_{C_j} (\Glt[j]{i}, C_i)$, we have
  \[
  \Gls{i}\big(u \comp_j^i v\big) = 
  \begin{cases}
    \Gls{i}(v), & j = i - 1, \\
    \Gls{i}(u) \comp_j^{i-1} \Gls{i}(v), & j < i - 1,
  \end{cases}
  \]
  and
\[
  \Glt{i}(u \comp_j^i v) = 
  \begin{cases}
    \Glt{i}(u), & j = i - 1, \\
    \Glt{i}(u) \comp_j^{i-1} \Glt{i}(v), & j < i - 1;
  \end{cases}
  \]
  \item for every $i \ge 0$ and every $u$ in $C_i$, we have
  \[
    \Gls{i+1}\Glk{i}(u) = u = \Glt{i+1}\Glk{i}(u).
  \] 
\end{itemize}
If $C$ is an \oo-precategory, for $i \ge j \ge 0$, we will denote by
$\Glk[i]{j}$ the map from $C_j$ to $C_i$ defined by
  \[ \Glk[i]{j} = \Glk{i-1}\cdots\Glk{j+1}\Glk{j}. \]

A \ndef{morphism of \oo-precategories} is a morphism of globular sets
between \oo-precategories which is compatible with the $\ast^i_j$'s and the
$\Glk{i}$'s in an obvious way. We will denote by~$\wpcat$ the category of
\oo-precategories.

A \ndef{strict \oo-category} is an \oo-precategory $C$ satisfying the
following axioms:
\begin{itemize}
  \item Associativity\\
    for every $i > j \ge 0$ and every $(u, v, w)$ in
    $(C_i, \Gls[j]{i}) \times_{C_j} (\Glt[j]{i}, C_i,
    \Gls[j]{i}) \times_{C_j} (\Glt[j]{i}, C_i)$,
    we have
    \[ (u \comp^i_j v) \comp^i_j w = u \comp^i_j (v \comp^i_j w)\text{;} \]
  \item Exchange law\\
    for every $i > j > k \ge 0$ and every $(u, u', v, v')$ in
    \[ (C_i, \Gls[j]{i}) \times_{C_j} (\Glt[j]{i}, C_i, \Gls[k]{i})
    \times_{C_k} (\Glt[k]{i}, C_i, \Gls[j]{i}) \times_{C_j} (\Glt[j]{i},
    C_i),\]
    we have
    \[ (u \comp^i_j u') \comp^i_k (v \comp^i_j v') = (u \comp^i_k v)
    \comp^i_j ( u' \comp^i_k v')\text{;} \]
  \item Identities\\
    for every $i > j \ge 0$ and every $u$ in $C_i$, we have 
       \[
         \Glk[i]{j}\Glt[j]{i}(u) \comp^i_j u = u =
          u \comp^i_j \Glk[i]{j}\Gls[j]{i}(u)\text{;}
        \]
  \item Functoriality of identities\\
    for every $i > j \ge 0$ and every 
    $(u, v)$ in $(C_i, \Gls[j]{i}) \times_{C_j} (\Glt[j]{i} ,C_i)$,
    we have
    \[ \Glk{i}(u \comp^i_j v) = \Glk{i}(u) \comp^{i+1}_j \Glk{i}(v). \]
\end{itemize}

The \ndef{category of strict \oo-categories} is the full subcategory of the
category of \oo-precat\-e\-go\-ries whose objects are strict \oo-categories. 

If $C$ is a strict \oo-category and if $u$ and $v$ are two parallel $n$-arrows of
$C$, the set of \hbox{$(n+1)$}\nbd-arrows from~$u$ to $v$ in $C$ will be denoted by
$\Hom_C(u, v)$.
\end{tparagr}

\begin{paragr}
Let $C$ be a strict \oo-category. To simplify the formulas involving the
operations of~$C$, we will adopt the two following conventions:
\begin{itemize}
  \item If $u$ is an $i$-arrow of $C$ and $v$ is a $j$-arrow of $C$ such
    that $\Gls[k]{i}(u) = \Glt[k]{j}(v)$ for some~$k$ less than $i$ and $j$,
    then we will denote by $u \comp_k v$ the $m$-arrow $\Glk[m]{i}(u)
    \comp^m_k \Glk[m]{j}(v)$, where $m$ is the greatest integer between $i$
    and $j$.
  \item If $u$ is a $j$-arrow of $C$, we will denote by $\Glid{u}$ the
    $(j+1)$-arrow $\Glk{j}(u)$ or, more generally, the $i$-arrow $\Glk[i]{j}(u)$
    for any $i \ge j$ if the context makes the value of $i$ clear.
\end{itemize}
\end{paragr}

\begin{tparagr}{Strict \oo-groupoids}
Let $C$ be a strict \oo-category and let $u : x \to y$ be an $i$-arrow
of~$C$ for $i \ge 1$. A \ndef{$\comp^i_{i-1}$-inverse}, or briefly an
\ndef{inverse}, of $u$ is an $i$-arrow $u^{-1} : y \to x$ of $C$ such that 
\[
u^{-1} \comp_{i-1} u = \Glid{x}
\quad\text{and}\quad
u \comp_{i-1} u^{-1} = \Glid{y}.
\]
If such an inverse exists, it is unique and the notation $u^{-1}$ is thus
unambiguous.

A \ndef{strict \oo-groupoid} is a strict \oo-category whose $i$-arrows, $i
\ge 1$, are invertible. The existence of $\comp^i_{i-1}$-inverses implies
the existence of $\comp^i_j$-inverses (in an obvious sense) for every $i >
j \ge 0$ (see Proposition 2.3 of \cite{AraMetBrown} for details).

The \ndef{category of strict \oo-groupoids} is the full subcategory of the
category of strict \oo-cat\-e\-go\-ries whose objects are strict
\oo-groupoids. We will denote it by $\wgpds$. Note that a morphism of strict
\oo-groupoids automatically preserves the inverses.
\end{tparagr}

\begin{tparagr}{Homotopy groups of strict \oo-groupoids}
Let $G$ be a strict \oo-groupoid. An $n$-arrow $u$ of $G$ is \ndef{homotopic}
to another $n$-arrow $v$ of $G$ if there exists an $(n+1)$-arrow from~$u$ to
$v$ in $G$. This obviously implies that the arrows $u$ and $v$ are parallel.
If $u$ is homotopic to~$v$, we will write $u \sim v$. The relation~$\sim$ is
an equivalence relation on $G_n$. It is moreover compatible with the
composition $\comp^n_{n-1} : G_n \times_{G_{n-1}} G_n \to G_n$.

The set of \ndef{connected components} of $G$ is
\[ \pi_0(G) = G_0/{\sim}. \]
If $n \ge 1$ and $u, v$ are two parallel $n$-arrows of $G$, we will denote
\[
\pi_n(G, u, v) = \Hom_G(u, v)/{\sim}
\quad\text{and}\quad
\pi_n(G, u) = \pi_n(G, u, u).\
\]
Note that the composition $\comp^n_{n-1}$ induces a group structure on
$\pi_n(G, u)$. For $n \ge 1$ and $x$~an object of $G$, the \ndef{$n$-th
homotopy group of $G$ at $x$} is
\[ \pi_n(G, x) = \pi_n(G, \Glk[n-1]{0}(x)). \]
It is immediate that $\pi_0$ induces a functor from the category of strict
\oo-groupoids to the category of sets, and that $\pi_n$, for $n \ge 1$, induces
a functor from the category of pointed strict \oo-groupoids to the category
of groups. Moreover, by the Eckmann-Hilton argument, the groups $\pi_n(G, x)$
are abelian when $n \ge 2$.
\end{tparagr}

\begin{tparagr}{Weak equivalences of strict \oo-groupoids}
A morphism $f : G \to H$ of strict \oo-groupoids is a \ndef{weak
equivalence} if
\begin{itemize}
  \item the map $\pi_0(f) : \pi_0(G) \to \pi_0(H)$ is a bijection;
  \item for all $n \ge 1$ and all object $x$ of $G$, the morphism 
    \[ \pi_n(f, x) : \pi_n(G, x) \to \pi_n(H, f(x)) \]
    is a group isomorphism.
\end{itemize}
\end{tparagr}

\begin{prop}\label{prop:def_w_eq}
Let $f : G \to H$ be a morphism of strict \oo-groupoids. The following conditions are
equivalent:
\begin{enumerate}
\item\label{item:w_eq} $f$ is a weak equivalence of strict \oo-groupoids;
\item $\pi_0(f) : \pi_0(G) \to \pi_0(H)$ is a bijection and for
all $n \ge 1$ and every $(n-1)$-arrow~$u$ of~$G$, $f$
 induces a bijection
 \[ \pi_n(G, u) \to \pi_n(H, f(u))\text{;} \]
\item $\pi_0(f) : \pi_0(G) \to \pi_0(H)$ is a bijection and for
all $n \ge 1$ and every pair $(u, v)$ of parallel $(n-1)$-arrows of~$G$, $f$
 induces a bijection
 \[ \pi_n(G, u, v) \to \pi_n(H, f(u), f(v))\text{;} \]
\item\label{item:folk_w_eq} 
$\pi_0(f) : \pi_0(G) \to \pi_0(H)$ is surjective and for all $n \ge 1$ and
every pair $(u, v)$ of parallel $(n-1)$-arrows of $G$, $f$ induces a surjection
\[ \pi_n(G, u, v) \to \pi_n(H, f(u), f(v))\text{.} \]
\end{enumerate}
\end{prop}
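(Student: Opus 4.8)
The plan is to prove the implications $\eqref{item:w_eq}\Rightarrow(2)\Rightarrow(3)\Rightarrow\eqref{item:folk_w_eq}\Rightarrow\eqref{item:w_eq}$, so that the main work is concentrated in the last implication. The implications $(2)\Rightarrow(3)$ and $(3)\Rightarrow\eqref{item:folk_w_eq}$ are essentially formal: for $(2)\Rightarrow(3)$, given parallel $(n-1)$-arrows $u,v$, I would note that $\Hom_G(u,v)$ is either empty or a torsor under $\Hom_G(u,u)$ (translate by a fixed $(n+1)$-arrow from $u$ to $v$ if one exists), and the same on the target side; since $\pi_0(f)$ bijective forces $f$ to detect whether such an arrow exists (here one uses that homotopic arrows at level $n-1$ can be connected, so emptiness of $\pi_n(H,f(u),f(v))$ reflects back), the bijectivity on $\pi_n(G,u)\to\pi_n(H,f(u))$ propagates to the $\pi_n(\,-,u,v)$. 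Then $(3)\Rightarrow\eqref{item:folk_w_eq}$ is trivial (bijective implies surjective). For $\eqref{item:w_eq}\Rightarrow(2)$ one translates a statement about $\pi_n(G,x)$ for objects $x$ into one about $\pi_n(G,u)$ for arbitrary $(n-1)$-arrows $u$: replacing $u$ by an object up to homotopy, using that $G$ is a groupoid so every $(n-1)$-arrow is, up to a zig-zag of higher cells, an iterated identity on an object, one gets the needed bijections; this requires the standard fact (cf. Proposition 2.3 of \cite{AraMetBrown}) that all $\comp^i_j$-inverses exist, which lets one conjugate.

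The substantial implication is $\eqref{item:folk_w_eq}\Rightarrow\eqref{item:w_eq}$, and more precisely the upgrade from surjectivity to injectivity of the maps on homotopy groups. I would argue by a downward-style induction on the dimension, exploiting the observation that injectivity of $\pi_n(f,u,v)$ is controlled by surjectivity one dimension up. Concretely, suppose $[\alpha],[\beta]\in\pi_n(G,u,v)$ have $f_*[\alpha]=f_*[\beta]$, i.e. there is an $(n+2)$-arrow in $H$ from $f(\alpha)$ to $f(\beta)$. Now $\alpha$ and $\beta$ are parallel $n$-arrows of $G$, so by the surjectivity hypothesis applied in dimension $n+1$ to the pair $(\alpha,\beta)$ — together with the fact that $\pi_0(f)$ surjective combined with surjectivity of all $\pi_{n+1}(f,-,-)$ shows that the relevant $\pi_{n+1}(H,f(\alpha),f(\beta))$ is nonempty exactly because $\pi_{n+1}(G,\alpha,\beta)\to\pi_{n+1}(H,f(\alpha),f(\beta))$ is onto and the target contains the class of the connecting $(n+2)$-arrow — there is an $(n+1)$-arrow $\gamma:\alpha\to\beta$ in $G$. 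Hence $[\alpha]=[\beta]$ in $\pi_n(G,u,v)$. The base of the argument, bijectivity of $\pi_0(f)$, then comes from: surjectivity is assumed, and injectivity follows by the same mechanism using surjectivity of $\pi_1(f,-,-)$ on the relevant pairs of objects (two objects sent to homotopic objects are themselves homotopic, because a $1$-arrow between their images lifts).

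The main obstacle I anticipate is bookkeeping the "nonemptiness" transfer: the clean statement "$f$ surjective on all $\pi_{n+1}(G,x,y)$ implies the lifting of a single connecting cell" needs care because a priori the class we want to lift lives in $\pi_{n+1}(H,f(\alpha),f(\beta))$ and we must know this set is hit by $\pi_{n+1}(G,\alpha,\beta)$ — which is exactly the surjectivity hypothesis, provided $\alpha,\beta$ are genuinely parallel $n$-arrows of $G$ mapping to the given cells, and they are. So the argument closes, but one must be scrupulous that at each stage the arrows in question are parallel (which holds since homotopic arrows are parallel and $f$ preserves the globular structure), and that the equivalence relation $\sim$ is compatible with composition (already recorded above) so that the torsor/translation manipulations in $(2)\Rightarrow(3)$ are legitimate. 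Everything else is a routine unwinding of the definitions of $\pi_n$ and of $\sim$.
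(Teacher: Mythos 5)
The paper itself gives no internal argument here --- it simply invokes Proposition 1.7 of the cited work of Ara--M\'etayer --- so your self-contained proof is by necessity a different route, and its heart is sound: for the implication $(4)\Rightarrow(1)$, if $f_*[\alpha]=f_*[\beta]$ in $\pi_n(H,f(u),f(v))$, the connecting cell makes $\pi_{n+1}(H,f(\alpha),f(\beta))$ nonempty, and surjectivity of $\pi_{n+1}(G,\alpha,\beta)\to\pi_{n+1}(H,f(\alpha),f(\beta))$ forces the source to be nonempty, so some $(n+1)$-arrow $\alpha\to\beta$ exists and $[\alpha]=[\beta]$; the same mechanism with $\pi_1(f,x,y)$ gives injectivity of $\pi_0(f)$. (Your dimension labels are off by one --- a homotopy between the $n$-arrows $f(\alpha)$ and $f(\beta)$ is an $(n+1)$-arrow --- but this does not affect the argument.)

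The two ``transfer'' implications are where your write-up has genuine problems. In $(1)\Rightarrow(2)$, the claim that every $(n-1)$-arrow is connected to an iterated identity by a zig-zag of higher cells is false: in the strict \oo-groupoid with one object and $\mathbb{Z}$ as $1$-arrows (all higher cells identities), no non-identity $1$-arrow is connected to the identity. What is true, and what ``conjugation'' should mean here, is that whiskering by $1_{u^{-1}}$ along $\comp_{n-2}$ is a bijection $\Hom_G(u,u)\to\Hom_G(1_a,1_a)$, $a$ the $(n-2)$-source of $u$ (inverse: whiskering by $1_u$; it is a group map by the exchange law and is compatible with $f$), giving $\pi_n(G,u)\cong\pi_n(G,1_a)$, and iterating this reduces to $\pi_n(G,x)$ for an object $x$; no homotopy between $u$ and an identity is available or needed. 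In $(2)\Rightarrow(3)$, the translation/torsor step is fine when $\pi_n(G,u,v)\neq\varnothing$, but the emptiness reflection is not a consequence of bijectivity of $\pi_0(f)$ alone when $n\ge 2$: you must show that an $n$-arrow $f(u)\to f(v)$ in $H$ forces one $u\to v$ in $G$, and the argument is again whiskering --- an $n$-arrow $u\to v$ exists if and only if $[v^{-1}\comp_{n-2}u]=[1_a]$ in $\pi_{n-1}(G,a)$, and likewise in $H$, so injectivity of $\pi_{n-1}(G,a)\to\pi_{n-1}(H,f(a))$ (condition $(2)$ one level down; for $n=1$, injectivity of $\pi_0(f)$) concludes. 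With these two repairs your cycle $(1)\Rightarrow(2)\Rightarrow(3)\Rightarrow(4)\Rightarrow(1)$ closes correctly.
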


\begin{proof}
See Proposition 1.7 of \cite{AraMetBrown}.
\end{proof}

\begin{tparagr}{Weakly contractible strict \oo-groupoids}
A strict \oo-groupoid $G$ is said to be \ndef{weakly contractible} if the
unique morphism from $G$ to the terminal strict \oo-groupoid is a weak
equivalence. In other words, $G$ is weakly contractible if
\begin{itemize}
  \item the set $\pi_0(G)$ is trivial;
  \item for all $n \ge 1$ and all object $x$ of $G$, the group $\pi_n(G, x)$
    is trivial.
\end{itemize}
\end{tparagr}

\begin{prop}\label{prop:def_contr}
A strict \oo-groupoid $G$ is weakly contractible if and only if $G$ is
non-empty and for every $n \ge 1$ and every pair $(u, v)$ of parallel
$(n-1)$-arrows of $G$, there exists an $n$-arrow from $u$ to $v$ in $G$.
\end{prop}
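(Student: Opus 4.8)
The plan is to deduce the statement directly from Proposition~\ref{prop:def_w_eq} applied to the canonical morphism $p : G \to e$, where $e$ denotes the terminal strict \oo-groupoid. By the very definition of weak contractibility, $G$ is weakly contractible if and only if $p$ is a weak equivalence, and by the equivalence of conditions~\eqref{item:w_eq} and~\eqref{item:folk_w_eq} in Proposition~\ref{prop:def_w_eq}, the latter holds if and only if $\pi_0(p) : \pi_0(G) \to \pi_0(e)$ is surjective and, for every $n \ge 1$ and every pair $(u, v)$ of parallel $(n-1)$-arrows of~$G$, the map $\pi_n(G, u, v) \to \pi_n(e, p(u), p(v))$ is surjective.

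The only additional input is the elementary observation that $e$ has exactly one $i$-arrow for every $i \ge 0$. First I would record that, as a consequence, $\pi_0(e)$ is a singleton and, for every $n \ge 1$ and every (necessarily parallel) pair $(a, b)$ of $(n-1)$-arrows of $e$, the set $\pi_n(e, a, b) = \Hom_e(a, b)/{\sim}$ is a singleton as well. Surjectivity of $\pi_0(p)$ then amounts precisely to $\pi_0(G)$ being nonempty, which---since every arrow of a globular set has an iterated source in degree~$0$---is equivalent to $G_0 \ne \emptyset$, i.e.\ to $G$ being nonempty. Likewise, surjectivity of $\pi_n(G, u, v) \to \pi_n(e, p(u), p(v))$ onto a singleton amounts precisely to $\pi_n(G, u, v) = \Hom_G(u, v)/{\sim}$ being nonempty, i.e.\ to the existence of an $n$-arrow from $u$ to $v$ in~$G$. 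Assembling these two reformulations yields exactly the asserted characterisation.

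I do not expect any genuine obstacle here: the substantive work has already been carried out in Proposition~\ref{prop:def_w_eq}, and all that remains is the trivial remark that the homotopy ``sets'' of the terminal \oo-groupoid are singletons, so that the surjectivity clauses of condition~\eqref{item:folk_w_eq} collapse to nonemptiness statements. If one preferred, the forward implication could instead be read off from condition~\eqref{item:w_eq} (a weak equivalence $G \to e$ forces each $\pi_n(G, u, v)$ to be in bijection with a singleton, hence nonempty), but exploiting the equivalence \eqref{item:w_eq}$\Leftrightarrow$\eqref{item:folk_w_eq} disposes of both directions simultaneously.
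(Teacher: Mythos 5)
Your proposal is correct and follows exactly the paper's route: the paper also deduces the statement from the equivalence (1) $\Leftrightarrow$ (4) of Proposition~\ref{prop:def_w_eq} applied to the unique morphism to the terminal strict \oo-groupoid. You merely spell out the (trivial) collapse of the surjectivity conditions to nonemptiness, which the paper leaves implicit.
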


\begin{proof}
This is exactly the content of the equivalence $(\ref{item:w_eq})
\Leftrightarrow (\ref{item:folk_w_eq})$ of Proposition \ref{prop:def_w_eq}
applied to the unique morphism from $G$ to the terminal strict \oo-groupoid.
\end{proof}

\section{Grothendieck $\infty$-groupoids}

In this section, we recall briefly the definition of Grothendieck
\oo-groupoids and we state our main result. We encourage the reader to read
Sections 1 and 2 of \cite{AraHomGr} for more explanation and examples.

\begin{tparagr}{Globular sums}
A \ndef{table of dimensions} is a table
\[
\tabdim,
\]
where $k \ge 1$, consisting of nonnegative integers satisfying 
\[ i_l > i'_l <  i_{l+1}, \qquad 1 \le l \le n - 1. \]
The integer $k$ is called the \ndef{width} of $T$.
The \ndef{dimension} of such a table is the greatest integer appearing in
the table.

Let $(C, F)$ be a category under $\G$, i.e., a category $C$ endowed with
a functor $F : \G \to C$. We will often denote in the same way the objects
and morphisms
of $\G$ and their image by the functor $F$. Let
\[ T = \tabdim \]
be a table of dimensions. The \ndef{globular sum} in $C$ associated to $T$
(if it exists) is the iterated amalgamated sum
\[ (\Dn{i_1}, \Ths[i'_1]{i_1}) \amalgd{i'_1} (\Tht[i'_1]{i_2}, \Dn{i_2},
\Ths[i'_2]{i_2}) \amalgd{i'_2} \cdots
\amalgd{i'_{k-1}} (\Tht[i'_{k-1}]{i_k}, \Dn{i_k}) \]
in $C$, i.e., the colimit of the diagram
\[
\xymatrix@R=.2pc@C=1pc{
\Dn{i_1} &  & \Dn{i_2} &  & \Dn{i_3} &        & \Dn{i_{k - 1}} & & \Dn{i_{k}} \\
  &  &   &  &   & \cdots &     & & \\
  & \Dn{i'_1}
  \ar[uul]^{\Ths[i'_1]{i_1}}
  \ar[uur]_{\negthickspace \Tht[i'_1]{i_2}}  &   &
  \Dn{i'_2}' 
  \ar[uul]^{\Ths[i'_2]{i_2}\negthickspace}
  \ar[uur]_{\Tht[i'_2]{i_3}}  &  &  & &
\Dn{i'_{k-1}} \ar[uul]^{\Ths[i'_{k-1}]{i_{k-1}}}  \ar[uur]_{\Tht[i'_{k-1}]{i_k}}
& &
}
\]
in $C$.
We will denote it briefly by
\[
\Dn{i_1} \amalgd{i'_1} \Dn{i_2} \amalgd{i'_2} \cdots
\amalgd{i'_{k-1}} \Dn{i_k}.
\]
\end{tparagr}

\begin{tparagr}{Globular extensions}
A category $C$ under $\G$ is said to be a \ndef{globular extension} if all
the globular sums exist in $C$, i.e., if for every table of dimensions
$T$, the globular sum associated to $T$ exists in $C$. 

If $C$ and $D$ are two globular extensions, a \ndef{morphism of globular
extensions} from $C$ to~$D$ is a functor from $C$ to~$D$ under $\G$
(that is, such that the triangle
\[
\xymatrix@C=1pc@R=1pc{
& \G \ar[ld] \ar[rd] \\
C \ar[rr] & & D
}
\]
commutes) which preserves globular sums. Such a functor will also be called a
\ndef{globular functor}. 
\end{tparagr}

\begin{tparagr}{The globular extension $\Thz$}\label{paragr:thz}
We will consider the category $\pref{\G}$ as a category under~$\G$ by using
the Yoneda functor. If $T$ is a table of dimensions, we will denote by~$G_T$
the globular sum associated to $T$ in $\pref{\G}$.

The category $\Thz$ is the category defined in the following way:
\begin{itemize}
  \item the objects of~$\Thz$ are the table of dimensions;
  \item if $S$ and $T$ are two objects of $\Thz$, then
    \[ \Hom_{\Thz}(S, T) = \Hom_{\pref{\G}}(G_S, G_T). \]
\end{itemize}
By definition of $\Thz$, there is a canonical fully faithful functor $\Thz
\to \pref{\G}$. This functor is moreover injective on objects and $\Thz$ can
be considered as a full subcategory of $\pref{\G}$.

The functor $\G \to \pref{\G}$ factors through $\Thz$ and we get a functor
$\G \to \Thz$. The category~$\Thz$ will always be considered as a category
under $\G$ using this functor. By definition, $\Thz$ is a globular
extension.

The globular extension $\Thz$ is the initial globular extension in the
following sense: for every globular extension $C$, there exists a globular
functor $\Thz \to C$, unique up to a unique natural transformation (see
Proposition 3.2 and paragraph 3.3 of \cite{AraThtld}). More precisely, if
$C$ is a globular extension, the choice of a globular functor $\Thz \to C$
amounts to the choice, for every table of dimensions $T$, of a globular sum
associated to $T$ in $C$ (this globular sum being only defined up to a
canonical isomorphism).
\end{tparagr}

\vbox{
\begin{rem}\ 
\begin{myenumeratep}
  \item The $G_T$'s are exactly the globular sets associated to finite
    planar rooted trees by Batanin in \cite{BataninWCat}. These globular
    sets are sometimes called \emph{globular pasting schemes}.
  \item The category $\Thz$ was first introduced by Berger in
    \cite{BergerNerve} in terms of planar rooted trees.
\end{myenumeratep}
\end{rem}
}

\begin{tparagr}{Globular theories}
A globular extension $C$ is called a \ndef{globular theory} if any globular
functor $\Thz \to C$ is bijective on objects. If $C$ is a globular theory,
then there exists a unique globular functor $\Thz \to C$.

A \ndef{morphism of globular theories} is a morphism of globular extensions
between globular theories. Note that if $C$ and $D$ are globular theories, a
morphism from $C$ to $D$ is nothing but a functor from~$C$ to~$D$
under~$\Thz$.
\end{tparagr}

\begin{tparagr}{Globular presheaves}\label{paragr:glob_pref}
Let $C$ be a globular theory. A \ndef{globular presheaf} on $C$, or
\ndef{model} of $C$, is a presheaf~$X$ on~$C$ which sends globular sums to
globular products (globular products being the notion dual to globular
sums). In other words, a presheaf~$X$ is a globular presheaf if, for every
table of dimensions
\[ \tabdim, \]
the canonical map
\[
X(\Dn{i_1} \amalgd{i'_1} \cdots \amalgd{i'_{k-1}} \Dn{i_k}) \to
X_{i_1} \times_{X_{i'_1}} \cdots \times_{X_{i'_{k-1}}} X_{i_k}
\]
is a bijection. We will denote by $\Mod{C}$ the full subcategory of the
category of presheaves on $C$ whose objects are the globular presheaves on
$C$.

If $C \to D$ is a morphism of globular theories, then the inverse image
functor from presheaves on $D$ to presheaves on $C$ restricts to a functor
from globular presheaves on~$D$ to globular presheaves on $C$.
\end{tparagr}

\begin{tparagr}{Globularly parallel arrows and liftings}
Let $C$ be a globular extension. Two morphisms $f, g : \Dn{n} \to X$ of $C$
are said to be globularly parallel if either $n = 0$, or $n \ge 1$ and
\[ f\Ths{n} = g\Ths{n} \quad\text{and}\quad f\Tht{n} = g\Tht{n}. \]

Let now $(f, g) : \Dn{n} \to X$ be a pair of morphisms of $C$.
A \ndef{lifting} of the pair $(f, g)$ is a morphism $h : \Dn{n+1} \to X$
such that
\[ h\Ths{n} = f \quad\text{and}\quad h\Tht{n} = g. \]
The existence of such a lifting obviously implies that $f$ and $g$ are
globularly parallel.
\end{tparagr}

\begin{tparagr}{Admissible pairs}\label{def:adm}
Let $C$ be a globular extension.
A pair of morphisms 
\[ (f, g) : \Dn{n} \to S \]
is said to be \ndef{$(\infty, 0)$-admissible}, or briefly \ndef{admissible},
if
\begin{itemize}
\item the morphisms $f$ and $g$ are globularly parallel;
\item the object $S$ is a globular sum;
\item the dimension of $S$ (as a globular sum) is less than or equal to $n + 1$.
\end{itemize}
\end{tparagr}

\begin{tparagr}{Contractible globular extensions}
A globular extension $C$ is \ndef{$(\infty, 0)$\nbd-con\-tracti\-ble},
or briefly \ndef{contractible}, if every admissible pair of $C$ admits a
lifting. If moreover, these liftings are unique, we will say that $C$ is
\ndef{canonically $(\infty, 0)$-contractible}, or briefly \ndef{canonically
contractible}.
\end{tparagr}

\begin{rem}\label{rem:def_adm}
The last condition in the definition of an admissible pair was not part of
Grothendieck's original definition and was introduced by us in
\cite{AraHomGr}. This condition is needed to make canonical the functor from
strict \oo-groupoids to Grothendieck \oo-groupoids obtained in
Theorem~\ref{thm:strweak} or, equivalently, to make the globular
extension~$\Thtld$ (defined in paragraph \ref{paragr:def_thtld}) a
\emph{canonically} contractible globular extension (see
Theorem~\ref{thm:thtld_can_contr} and Remark~\ref{rem:thtld_can_contr}).
\end{rem}

\begin{tparagr}{Free globular extensions}
A \ndef{cellular tower} of globular extensions is a tower of globular
extensions
\[ C_0 = \Thz \to C_1 \to \cdots \to C_n \to \cdots\pbox{,} \]
endowed, for each $n \ge 0$, with a set $A_n$ of admissible pairs of $C_n$,
such that $C_{n+1}$ is the globular extension obtained from $C_n$ by
formally adding a lifting to each admissible pair in $A_n$.

We will say that a globular extension $C$ is \ndef{free} if $C$ is the
colimit of some cellular tower of globular extensions. If $C$ is a free
globular extension, then $C$ is a globular theory.
\end{tparagr}

\begin{prop}\label{prop:coh_weak_init}\ 
Let $C$ be a free globular extension.
\begin{enumerate}
  \item For any contractible globular extension $D$, there exists a globular
    functor from $C$ to $D$.
  \item For any canonically contractible globular theory $D$, there exists a
    unique globular functor $C$ to $D$.
\end{enumerate}
\end{prop}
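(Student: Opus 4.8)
The plan is to build the globular functor $F : C \to D$ by induction along a cellular tower $C_0 = \Thz \to C_1 \to \cdots \to C_n \to \cdots$ presenting $C$, equipped with its sets $A_n$ of admissible pairs, and then to pass to the colimit. For the base case, $C_0 = \Thz$ is the initial globular extension (paragraph~\ref{paragr:thz}), so a globular functor $F_0 : \Thz \to D$ exists; in case (2), since $D$ is a globular theory, this $F_0$ is the \emph{unique} globular functor $\Thz \to D$.

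For the inductive step, suppose a globular functor $F_n : C_n \to D$ has been built. The key point I would establish is that $F_n$ carries admissible pairs of $C_n$ to admissible pairs of $D$. Indeed, if $(f,g) : \Dn{n} \to S$ is admissible, then $f$ and $g$ are globularly parallel, and this is preserved by $F_n$ since $F_n$ is a functor under $\G$ and so commutes with $\Ths{n}$ and $\Tht{n}$; moreover, $F_n$ being a globular functor, it sends the globular sum $S$ associated to a table of dimensions $T$ to a globular sum associated to the same $T$, hence of the same dimension, so the bound $\le n+1$ is preserved. Applying contractibility of $D$, I would choose, for each pair in $A_n$, a lifting in $D$ of its image under $F_n$; by the universal property of $C_{n+1}$ (obtained from $C_n$ by formally adjoining a lifting to each pair in $A_n$), these choices assemble into a globular functor $F_{n+1} : C_{n+1} \to D$ restricting to $F_n$. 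In case (2), canonical contractibility makes each of these liftings unique, whence $F_{n+1}$ is the unique globular functor $C_{n+1} \to D$ extending $F_n$.

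Passing to the colimit gives a functor $F = \operatorname{colim}_n F_n : C \to D$ under $\G$ restricting to each $F_n$. To check $F$ is globular, I would use that all the $C_n$ (and $C$) share the object set of $\Thz$, and that every globular sum cocone in $C$ is the image, under the structure morphism $\Thz = C_0 \to C$, of the corresponding globular sum cocone in $\Thz$; since $F_0$ is globular and $F$ extends it, $F$ preserves these cocones. This proves (1). For the uniqueness in (2), given any globular functor $G : C \to D$, I would restrict it to each $C_n$: one has $G|_{C_0} = F_0$ by uniqueness of the globular functor $\Thz \to D$ (as $D$ is a globular theory), and inductively $G|_{C_{n+1}} = F_{n+1}$ by the uniqueness of the extension established above; hence $G = F$.

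The main obstacle — essentially the only step that is not bookkeeping — is the claim that $F_n$ preserves admissibility, that is, that preserving globular sums forces preservation of their dimension. This rests on the description of globular sums as colimits indexed by tables of dimensions, so that the dimension of a globular sum depends only on its table and not on the ambient globular extension; everything else is a routine induction, modulo invoking the universal property of the ``formally adjoin a lifting'' construction from \cite{AraHomGr}.
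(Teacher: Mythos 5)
Your proposal is correct and follows essentially the same route as the paper: build the functor inductively along a cellular tower using the initiality of $\Thz$, the universal property of formally adjoining liftings, and (canonical) contractibility of $D$, then pass to the colimit. The only difference is that you spell out explicitly that a globular functor sends admissible pairs to admissible pairs (in particular preserving the dimension bound via the table of dimensions), a point the paper's proof leaves implicit.
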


\begin{proof}
See Proposition 2.14 of \cite{AraHomGr} for the first point. The second
point is related to Remark 2.14.1 of loc.~cit. Let us prove it (this will
also give a proof of the first point).

Let $(C_n, A_n)$ be any cellular tower such that $C$ is the colimit of the
$C_n$'s. For $n \ge 0$, let us denote by $i_{n+1}$ the globular functor from
$C_n$ to $C_{n+1}$. By the universal property of the colimit, a globular
functor $C \to D$ is given by a system of globular functors $F_n : C_n \to  D$
such that $F_{n+1}i_n = F_n$. (Note that all the functors involved are
functors under $\Thz$ and are hence automatically globular.) By the
universal property defining $C_{n+1}$ from~$C_n$, such a system
is uniquely determined by $F_0 : C_0 = \Thz \to D$ and by the
choice, for every pair $(f, g)$ of $A_n$, of a lifting of the admissible
pair $(F_n(f), F_n(g))$ in $D$.

It follows immediately that if $D$ is contractible, such a system exists and
that if $F_0$ is fixed (which is the case if $D$ is a globular theory) and
that $D$ is canonically contractible, such a system is unique.
\end{proof}

\begin{tparagr}{Coherators}
An \ndef{$(\infty, 0)$-coherator}, or briefly a \ndef{coherator}, is a
globular extension which is free and $(\infty, 0)$-contractible.
\end{tparagr}

\begin{tparagr}{$\infty$-groupoids of type~$C$}
Let $C$ be a coherator. An \ndef{\oo-groupoid of type~$C$} is a globular
presheaf on $C$. The category of \oo-groupoids of type $C$ is the category
$\Mod{C}$. This category will be denoted more suggestively by~$\wgpdC{C}$.
\end{tparagr}

\goodbreak
We can now state our main result:
\begin{thm*}
Let $C$ be a coherator. There exists a canonical functor
\[ \wgpds \to \wgpdC{C}. \]
Moreover, this functor is fully faithful.
\end{thm*}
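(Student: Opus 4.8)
The plan is to factor the construction through the globular extension $\Thtld$ and to exploit its (canonical) contractibility. First I would recall from Section~3 that $\Thtld$ is a globular extension equipped with a globular functor $\Thz\to\Thtld$ and, more importantly, that the category $\wgpds$ of strict \oo-groupoids is equivalent to the category $\Mod{\Thtld}$ of globular presheaves on~$\Thtld$: a strict \oo-groupoid gives a globular presheaf on $\Thtld$ by evaluating on globular sums (which must be sent to globular products), and conversely. Thus it suffices to produce a canonical globular functor $C\to\Thtld$, since then restriction of presheaves along this functor sends globular presheaves to globular presheaves (paragraph~\ref{paragr:glob_pref}), yielding a functor $\wgpdC{C}=\Mod{C}\to\Mod{\Thtld}\simeq\wgpds$ — wait, that is the wrong direction; rather, I want the functor $\wgpds\to\wgpdC{C}$, so I should instead observe that a morphism of globular theories $C\to\Thtld$ induces $\Mod{\Thtld}\to\Mod{C}$, i.e. $\wgpds\simeq\Mod{\Thtld}\to\Mod{C}=\wgpdC{C}$, which is exactly what we want.

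Next I would invoke Proposition~\ref{prop:coh_weak_init}: since $C$ is a coherator it is in particular a \emph{free} globular extension, and since $\Thtld$ is (to be shown in Section~6) a \emph{canonically contractible} globular theory, part~(2) of that proposition furnishes a \emph{unique} globular functor $C\to\Thtld$. This uniqueness is the source of the word ``canonical'' in the statement: the induced functor $\wgpds\to\wgpdC{C}$ does not depend on any choices. (If one only used Grothendieck's original definition of admissibility, $\Thtld$ would be merely contractible, not canonically so, and one would get existence but not uniqueness — this is the point of Remark~\ref{rem:def_adm}.)

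For the ``fully faithful'' assertion I would argue as follows. The functor $\wgpds\to\wgpdC{C}$ is the composite of the equivalence $\wgpds\simeq\Mod{\Thtld}$ with the restriction functor $\Mod{\Thtld}\to\Mod{C}$ along the globular functor $\pi\colon C\to\Thtld$. So it is enough to show that this restriction functor is fully faithful. Here I would appeal to the results of Section~7 on conditions on a globular extension guaranteeing a fully faithful comparison functor from strict \oo-groupoids to its globular presheaves: concretely, one needs that $\pi$ is bijective on objects (automatic, as both are globular theories and globular functors between globular theories are bijective on objects) and that $\pi$ is ``full enough'' in the sense that every morphism of $\Thz$-diagrams relevant to detecting maps and equalities of morphisms of globular presheaves is already hit — this follows because the globular sums, hence the objects and the generating spans, are common to $C$ and $\Thtld$, and because $\Thtld$ is generated over $\Thz$ by liftings that $C$, being contractible, also possesses. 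Thus restriction along $\pi$ is fully faithful on globular presheaves, and composing with the equivalence $\wgpds\simeq\Mod{\Thtld}$ gives the claim.

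The main obstacle is the fully faithfulness: establishing that $\wgpds\simeq\Mod{\Thtld}$ (which rests on the contractibility of free strict \oo-groupoids on globular pasting schemes, the technical heart of Sections~4--6) and then controlling how morphisms and equations of globular presheaves on $C$ relate to those on $\Thtld$ under restriction along $\pi$. This requires the criterion developed in Section~7 and a careful verification that $\pi\colon C\to\Thtld$ meets its hypotheses; everything else — the existence and uniqueness of $\pi$, and the passage to restriction functors — is formal once Proposition~\ref{prop:coh_weak_init} and the identification $\wgpds\simeq\Mod{\Thtld}$ are in hand.
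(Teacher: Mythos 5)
Your construction of the functor and of its canonicity is exactly the paper's argument: Theorem~\ref{thm:thtld_can_contr} makes $\Thtld$ canonically contractible, Proposition~\ref{prop:coh_weak_init}~(2) then yields the \emph{unique} globular functor $\pi \colon C \to \Thtld$, and restriction along $\pi$ together with the equivalence $\wgpds \cong \Mod{\Thtld}$ (Proposition~\ref{prop:glob_pref_thtld}) gives the canonical functor $\wgpds \to \wgpdC{C}$; your remark about the role of the modified admissibility condition matches Remark~\ref{rem:def_adm}. This part is fine.

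The gap is in the fully-faithfulness step. What you present as the criterion of Section~\ref{sec:ff} --- that $\pi$ be bijective on objects and ``full enough'' so that the $\Thtld$-morphisms needed to detect naturality and equality of maps of globular presheaves are already hit by $C$ --- is not what that section proves, and as a strategy it would amount to requiring $\pi$ to be (close to) full. Nothing of the sort is available: $\Thtld$ is the full subcategory of $\wgpds$ on the free strict \oo-groupoids $L(G_T)$, so its morphisms are arbitrary morphisms of such \oo-groupoids, and the paper neither shows nor needs that these lie in the image of the coherator; likewise the claim that ``$\Thtld$ is generated over $\Thz$ by liftings that $C$ also possesses'' is not established anywhere and is essentially a restatement of the fullness you would need. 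The actual argument (Corollary~\ref{coro:abs_ff}) sidesteps any fullness of $\pi$: since $C$ is contractible it carries a precategorical structure (Proposition~\ref{prop:str_precat_ext}), and since the precategorical structure on $\Thtld$ is canonical (by canonical contractibility), $\pi$ automatically respects these structures; hence the restriction functor fits into a commutative triangle over $\wpcat$, where $\Mod{\Thtld} \to \wpcat$ is the forgetful functor from strict \oo-groupoids, hence fully faithful, while $\Mod{C} \to \wpcat$ is faithful, and full faithfulness of $\Mod{\Thtld} \to \Mod{C}$ follows formally from these two facts. So the only hypothesis to verify about $\pi$ is that it is a globular functor out of a contractible globular theory; the substance lies in the precategorical comparison over $\wpcat$, not in any property of the image of $\pi$.
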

\noindent This result will be proved in the very last section of the
article.

\section{The globular extension $\Thtld$}

\begin{tparagr}{Free strict $\infty$-groupoids}
Let $U : \wgpds \to \pref{\G}$ be the forgetful functor sending a strict
\oo-groupoid to its underlying globular set. This functor $U$ admits a left
adjoint $L : \pref{\G} \to \wgpds$ which by definition sends a globular set
to the \ndef{free strict \oo-groupoid} on this globular set.
\end{tparagr}

\begin{tparagr}{The globular extension $\Thtld$}\label{paragr:def_thtld}
Recall that if $T$ is a table of dimensions, we defined in paragraph
\ref{paragr:thz} an associated globular set $G_T$.

The category $\Thtld$ is the category defined in the following way:
\begin{itemize}
  \item the objects of $\Thtld$ are the table of dimensions;
  \item if $S$ and $T$ are two objects of $\Thtld$, then
    \[ \Hom_{\Thtld}(S, T) = \Hom_{\wgpds}(L(G_S), L(G_T)). \]
\end{itemize}
By definition of $\Thtld$, there are canonical functors
\[ \Thz \to \Thtld \to \wgpds. \]
The functor $\Thtld \to \wgpds$ is by definition fully faithful. It is
easily seen to be injective on objects and $\Thtld$ can thus be considered
as a full subcategory of the category of strict \oo-groupoids.

The category $\Thtld$ will always be considered as a category under $\G$ by
using the functor $\G \to \Thz \to \Thtld$. It follows immediately from the
fact that $L$ is a left adjoint and hence preserves colimits that $\Thtld$
is a globular extension and hence a globular theory.
\end{tparagr}

\begin{prop}\label{prop:glob_pref_thtld}
The category of globular presheaves on $\Thtld$ is canonically equivalent to
the category of strict \oo-groupoids. 
\end{prop}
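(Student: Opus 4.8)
The plan is to identify globular presheaves on $\Thtld$ with strict $\infty$-groupoids by a Yoneda-type argument, exploiting the fact that $\Thtld$ was built as a full subcategory of $\wgpds$ on the objects $L(G_T)$, where the $G_T$ range over all globular pasting schemes. Recall that $\Thz$ is the full subcategory of $\pref{\G}$ on the globular sums $G_T$, and $\Thtld$ has the same objects with $\Hom_{\Thtld}(S,T) = \Hom_{\wgpds}(L(G_S), L(G_T))$; in particular the globular object $\Dn{n}$ of $\Thtld$ corresponds to $L(G_{\Dn n})$, the free strict $\infty$-groupoid on the $n$-globe. For a strict $\infty$-groupoid $C$, the assignment $T \mapsto \Hom_{\wgpds}(L(G_T), C)$ defines a presheaf on $\Thtld$; call it $\Phi(C)$. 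First I would check that $\Phi(C)$ is a globular presheaf: the value of $\Phi(C)$ on the globular sum $\Dn{i_1}\amalgd{i'_1}\cdots\amalgd{i'_{k-1}}\Dn{i_k}$ is $\Hom_{\wgpds}(L(G_{\Dn{i_1}\amalgd{}\cdots}), C)$, and since $L$ is a left adjoint it carries the colimit diagram defining $G_T$ in $\pref{\G}$ to the corresponding colimit in $\wgpds$; applying $\Hom_{\wgpds}(-,C)$ turns this colimit into the matching limit, which is exactly the iterated fibre product $\Hom_{\wgpds}(L(G_{\Dn{i_1}}),C)\times\cdots$, i.e. $X_{i_1}\times_{X_{i'_1}}\cdots\times_{X_{i'_{k-1}}}X_{i_k}$ for $X = \Phi(C)$. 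So $\Phi$ lands in $\Mod{\Thtld}$, and it is visibly functorial in $C$.

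Next I would construct the inverse. Given a globular presheaf $X$ on $\Thtld$, its restriction along $\G \to \Thz \to \Thtld$ is a globular set; I claim it carries a canonical strict $\infty$-groupoid structure. The point is that all the structure maps of a strict $\infty$-groupoid — the compositions $\comp^i_j$, the units $\Glk i$, and the inverses — are themselves morphisms of free strict $\infty$-groupoids between the $L(G_T)$'s, hence morphisms in $\Thtld$. For instance, the composition $\comp^i_j\colon (C_i,\Gls[j]i)\times_{C_j}(\Glt[j]i,C_i)\to C_i$ is induced by a morphism $L(G_S)\to L(G_{\Dn i})$ in $\wgpds$ where $G_S = G_{\Dn i}\amalgd j G_{\Dn i}$ is the relevant globular sum; applying $X$ and using that $X$ sends this globular sum to the corresponding globular product gives an operation $X_i\times_{X_j}X_i\to X_i$ on the underlying globular set of $X$. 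Similarly the $\infty$-groupoid axioms (associativity, exchange, identities, functoriality of identities, invertibility) hold in $\wgpds$ between the relevant free objects, hence become commutative diagrams of sets after applying the globular-product-preserving functor $X$. This produces a strict $\infty$-groupoid $\Psi(X)$, functorially in $X$.

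Then I would verify $\Phi$ and $\Psi$ are mutually inverse. For $\Psi\circ\Phi$: given $C\in\wgpds$, the underlying globular set of $\Psi(\Phi(C))$ has $n$-arrows $\Hom_{\wgpds}(L(G_{\Dn n}),C) = \Hom_{\pref\G}(G_{\Dn n}, UC) = (UC)_n$ by the free–forgetful adjunction and Yoneda, and one checks the operations transported through $\Psi$ match those of $C$ — this is essentially the statement that $C$ is recovered from its own representable data, i.e. a Yoneda argument. For $\Phi\circ\Psi$: given $X\in\Mod{\Thtld}$, I must show the canonical map $X(T)\to\Hom_{\wgpds}(L(G_T),\Psi(X))$ is a bijection for every table $T$. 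For $T$ an $n$-globe this is again Yoneda and the adjunction; for general $T$ both sides are the same iterated globular product (the left side because $X$ is a globular presheaf, the right side because $\Hom_{\wgpds}(-,\Psi(X))$ turns the colimit $L(G_T)$ into a limit), and one checks the bijections are compatible. The main obstacle, and the step demanding the most care, is the bookkeeping in $\Psi$: one must be sure that \emph{every} structure map and \emph{every} axiom of a strict $\infty$-groupoid really is witnessed by a morphism, resp. a commuting diagram, in $\Thtld$ between objects of the form $L(G_T)$ — equivalently that the "free strict $\infty$-groupoid" monad on $\pref\G$ is, in the appropriate sense, generated by globular pasting schemes. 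Once that dictionary is set up, the equivalence is a formal consequence of the defining property of $\Thtld$ together with the $L\dashv U$ adjunction. (Alternatively, the whole statement can be deduced from the general principle that for $C$ a globular theory arising as the "theory of" an algebraic structure on globular sets defined by operations and equations supported on globular sums, $\Mod C$ recovers that category of algebras; $\Thtld$ is by construction the theory associated to strict $\infty$-groupoids.)
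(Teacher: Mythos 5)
Your route — a nerve functor $C \mapsto \Hom_{\wgpds}(L(G_{(-)}),C)$ together with an inverse $\Psi$ that reads the groupoid operations off a globular presheaf — is the natural one, and it is essentially the route of the reference the paper cites for this statement (Propositions 3.21 and 3.22 of \cite{AraThtld}); the paper gives no independent argument. The problem is that the step you defer as ``bookkeeping'' is precisely the mathematical content of those propositions, not a formality. Full faithfulness of the nerve is indeed formal: since $L$ preserves colimits, $\Hom_{\wgpds}(L(G_T),C)$ decomposes as the iterated fibre product over the disks, so a natural transformation is determined by, and compatible with, its components on the $\Dn{n}$'s. But for $\Phi\circ\Psi \cong \mathrm{id}$ you must check that the pointwise bijections $X(T) \cong \Hom_{\wgpds}(L(G_T),\Psi(X))$ are natural with respect to \emph{every} morphism $\varphi : S \to T$ of $\Thtld$, i.e.\ every morphism of strict \oo-groupoids $L(G_S) \to L(G_T)$, and $X$ is only given abstractly: the only way to compute $X(\varphi)$ is to factor $\varphi$ through the structural morphisms used to define $\Psi(X)$. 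This requires knowing that every cell of the free strict \oo-groupoid $L(G_T)$ is an iterated composite of identities and inverses of the generating cells of $G_T$ (equivalently, that the free \oo-groupoid monad is ``generated by'' the globular sums, in the sense of a monad with arities). You name this obstacle but do not prove it, and nothing in your sketch supplies it; as written the essential-surjectivity half of the equivalence is therefore not established.

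A smaller point of variance: the composition of a model is induced by a \emph{co-operation}, i.e.\ a morphism $\Dn{i} \to \Dn{i}\amalgd{j}\Dn{i}$ of $\Thtld$, which is a morphism of strict \oo-groupoids $L(\Dn{i}) \to L(G_S)$ (the composite of the two generating $i$-arrows of $L(G_S)$), not a morphism $L(G_S) \to L(\Dn{i})$ as you wrote; it is applying the contravariant $X$ to this co-operation, together with the globular-product condition, that yields $X_i \times_{X_j} X_i \to X_i$. The same reversal applies to units and inverses. This is easily repaired, but it should be stated correctly since the whole construction of $\Psi$ rests on it.
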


\begin{proof}
See Propositions 3.21 and 3.22 of \cite{AraThtld}.
\end{proof}

\begin{tparagr}{Towards our canonical fully faithful functor}
The next three sections are dedicated to proving that the globular extension
$\Thtld$ is canonically contractible (see the introduction for details on
the different steps). Assuming this fact, we can easily get half of our main
result. Indeed, by applying Proposition~\ref{prop:coh_weak_init} to
$\Thtld$, we get that if $C$ is a coherator, there exists a unique globular
functor $\Thtld \to C$. This globular functor induces a functor 
\[ \wgpds \cong \Mod{\Thtld} \to \Mod{C} = \wgpdC{C}, \] 
and we thus obtain a canonical functor from strict \oo-groupoids to
\oo-groupoids of type~$C$.

The second half of the result, namely the fact that this functor is fully
faithful, will follow from the developments of Section \ref{sec:ff}.
\end{tparagr}

\section{Homotopy theory of strict \oo-groupoids}

\begin{tparagr}{Trivial fibrations of strict \oo-groupoids}
Recall that a morphism of presheaves is said to be a \ndef{trivial
fibration} if it has the right lifting property with respect to every
monomorphism. This defines in particular a notion of \ndef{trivial
fibration} of globular sets.

A morphism of strict \oo-groupoids is said to be a \ndef{trivial fibration}
if its underlying morphism of globular sets is a trivial fibration.
\end{tparagr}

\begin{tparagr}{Free strict $\infty$-groupoids}
Recall that we denote by $U : \wgpds \to \pref{\G}$ the forgetful functor
from strict \oo-groupoids to globular sets and by $L : \pref{\G} \to \wgpds$
its left adjoint.
\end{tparagr}

\begin{tparagr}{Disks}
We will consider the Yoneda functor $\G \to \pref{\G}$ as an inclusion. In
particular, for every $n \ge 0$, we have a globular set $\Dn{n}$. If $X$ is a
globular set, a morphism $\Dn{n} \to X$ corresponds, by the Yoneda lemma, to an
$n$-arrow of $X$.

For $n \ge 0$, the strict \oo-groupoid $L(\Dn{n})$ will be denoted by
$\Dnt{n}$. It follows from the fact that $L$ is a left adjoint to $U$ that
if $G$ is a strict \oo-groupoid, a morphism $\Dnt{n} \to G$ corresponds to
an $n$-arrow of $G$.

For $i \ge j \ge 0$, we have two morphisms $\Thst[j]{i}, \Thtt[j]{i} :
\Dnt{j} \to \Dnt{i}$ defined by
\[
\Thst[j]{i} = L(\Ths[j]{i})
\quad\text{and}\quad
\Thtt[j]{i} = L(\Tht[j]{i}).
\]
When $j = i - 1$, we will denote
$\Thst{i} = \Thst[i-1]{i}$
and
$\Thtt{i} = \Thtt[i-1]{i}$.
\end{tparagr}

\begin{tparagr}{Spheres}
We define by induction on $n \ge 0$ a globular set $\Sn{n-1}$ endowed with a
morphism $i_n : \Sn{n-1} \to \Dn{n}$ in the following way. For $n = 0$, we
set
 \[ \Sn{-1} = \varnothing, \]
the empty globular set, and we define
 \[ i_0 : \Sn{-1} \to \Dn{0} \] 
as the unique morphism from the initial object to $\Dn{0}$. For $n \ge 1$,
we set
\[
\Sn{n-1} = (\Dn{n-1}, i_{n-1}) \amalg_{\Sn{n-2}} (i_{n-1}, \Dn{n-1})
\]
and
\[ i_n = (\Tht{n}, \Ths{n}) : \Sn{n-1} \to \Dn{n}. \]

The globular set $\Sn{n-1}$ can be described concretely as the sub-globular
set of $\Dn{n}$ obtained from $\Dn{n}$ by removing the unique $n$-arrow. In
particular, if $X$ is a globular set and~$n \ge 1$, a morphism $\Sn{n-1} \to
X$ corresponds to a pair of parallel $(n-1)$-arrows of $X$.

If $n \ge 0$, the strict \oo-groupoid $L(\Sn{n-1})$ will be denoted by
$\Snt{n-1}$. Since $L$ is a left adjoint and hence preserves pushouts, the
$\Snt{n-1}$'s can be constructed from the $\Dnt{m}$'s using a similar
induction. In particular, we have
\[ \Snt{n-1} = \Dnt{n-1} \amalg_{\Snt{n-2}} \Dnt{n-1}. \]
It follows from the fact that $L$ is a left adjoint to $U$ that if $G$ is a
strict \oo-groupoid and~$n \ge 1$, a morphism $\Snt{n-1} \to G$ corresponds
to a pair of parallel $(n-1)$-arrows of $G$.
\end{tparagr}

\vbox{
\begin{prop}\label{prop:triv_fib}\ 
\begin{enumerate}
  \item A morphism $X \to Y$ of globular sets is a trivial fibration if
    and only if it has the right lifting property with respect to the
    morphisms $\Sn{n-1} \to \Dn{n}$, $n \ge 0$.
  \item A morphism $G \to H$ of strict \oo-groupoids is a trivial fibration
    if and only if it has the right lifting property with respect to the
    morphisms $\Snt{n-1} \to \Dnt{n}$, $n \ge 0$.
\end{enumerate}
\end{prop}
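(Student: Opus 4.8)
The plan is to deduce point (2) from point (1) by a formal transposition along the adjunction $L \dashv U$, and to prove point (1) by the standard cellular presentation of monomorphisms in a presheaf category.

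For point (1), the implication ``trivial fibration $\Rightarrow$ right lifting property against the $\Sn{n-1} \to \Dn{n}$'' is immediate, since each of these maps is a monomorphism: by construction $\Sn{n-1}$ is the sub-globular set of $\Dn{n}$ obtained by deleting the top $n$-arrow. For the converse, the point is that $\Sn{n-1} \to \Dn{n}$ is precisely the boundary inclusion of the representable $\Dn{n}$, and that every monomorphism of globular sets is built from these by elementary colimit operations. Concretely, given a monomorphism $A \hookrightarrow B$ of $\pref{\G}$, I would consider the dimensionwise filtration $A = B^{(-1)} \subseteq B^{(0)} \subseteq B^{(1)} \subseteq \cdots$, where $B^{(n)}$ is obtained from $B^{(n-1)}$ by adjoining the $n$-arrows of $B$ not already in $A$. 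Each stage fits into a pushout square
\[
\xymatrix@C=2.5pc@R=2pc{
\coprod \Sn{n-1} \ar[r] \ar[d] & B^{(n-1)} \ar[d] \\
\coprod \Dn{n} \ar[r] & B^{(n)}
}
\]
the coproducts being indexed by the set of $n$-arrows of $B$ not in $A$, and $B = \operatorname{colim}_n B^{(n)}$. If now $p \colon X \to Y$ has the right lifting property against every $\Sn{n-1} \to \Dn{n}$, it has it against every coproduct $\coprod \Sn{n-1} \to \coprod \Dn{n}$ (solve each component separately), hence against every $B^{(n-1)} \hookrightarrow B^{(n)}$ (a lifting problem against a pushout of $j$ reduces to one against $j$), hence against the composite $A \hookrightarrow B$ (build the filler stage by stage; the partial fillers are compatible, so they glue). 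Letting $A \hookrightarrow B$ range over all monomorphisms gives the claim.

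For point (2), recall that by definition $f \colon G \to H$ is a trivial fibration of strict \oo-groupoids if and only if $U(f)$ is a trivial fibration of globular sets, which by point (1) holds if and only if $U(f)$ has the right lifting property against $\Sn{n-1} \to \Dn{n}$ for all $n \ge 0$. Using the adjunction bijections $\Hom_{\pref{\G}}(\Sn{n-1}, U(G)) \cong \Hom_{\wgpds}(\Snt{n-1}, G)$, $\Hom_{\pref{\G}}(\Dn{n}, U(H)) \cong \Hom_{\wgpds}(\Dnt{n}, H)$, and so on --- recalling that $\Snt{n-1} = L(\Sn{n-1})$ and $\Dnt{n} = L(\Dn{n})$ --- a lifting problem of $U(f)$ against $\Sn{n-1} \to \Dn{n}$ corresponds, by naturality of the adjunction, to a lifting problem of $f$ against $\Snt{n-1} \to \Dnt{n}$, and diagonal fillers of one correspond bijectively to diagonal fillers of the other under the same bijections. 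Hence $U(f)$ has the right lifting property against all the $\Sn{n-1} \to \Dn{n}$ if and only if $f$ has the right lifting property against all the $\Snt{n-1} \to \Dnt{n}$, which is the asserted equivalence.

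The only real work is in point (1): making precise (or invoking from the literature on presheaf toposes) the cellular decomposition of an arbitrary monomorphism of globular sets as a sequential composite of pushouts of coproducts of the boundary inclusions $\Sn{n-1} \to \Dn{n}$. Granting this, point (2) is purely formal, being just the transposition of lifting problems along $L \dashv U$.
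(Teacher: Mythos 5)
Your argument is correct, and for point (2) it coincides with the paper's: the paper also disposes of the strict \oo-groupoid case by formally transposing lifting problems along the adjunction $L \dashv U$. For point (1) you take a more self-contained route: the paper simply observes that $\G$ is a direct category, that $\Sn{n-1} \to \Dn{n}$ is the boundary inclusion $\eDn{n} \to \Dn{n}$ for that direct structure, and then invokes Proposition 8.1.37 of \cite{Cisinski}; your skeletal filtration $A = B^{(-1)} \subseteq B^{(0)} \subseteq \cdots$ with each step a pushout of a coproduct of boundary inclusions is exactly the content of that general result, specialized to $\G$. The specialization is particularly clean here, and it is worth noting why your pushout squares really are pushouts: since $\G$ has only the cofaces $\Ths{}$, $\Tht{}$ (no codegeneracies, no nontrivial automorphisms), the structure maps of a globular set strictly lower dimension, so a subpresheaf is just a family of subsets closed under sources and targets; hence $B^{(n)}$ differs from $B^{(n-1)}$ only in dimension $n$, every new $n$-cell is ``nondegenerate'' for free, and its boundary lands in $B^{(n-1)}$, which gives the levelwise identification of the pushout with $B^{(n)}$ (in dimension $n$ one is just adjoining the set $B_n \setminus A_n$ along the empty set). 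Granting that verification and the standard closure of left lifting classes under coproducts, pushouts and sequential composites, your proof of (1) is complete; what you gain is an elementary argument avoiding the general machinery of presheaves on direct categories, what the paper gains is brevity and a statement that applies uniformly (e.g.\ it adapts immediately to the analogous result for strict \oo-categories mentioned in the remark following the proposition).
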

}

\begin{proof}
The category $\G$ is a direct category. Moreover, the concrete description
of $\Sn{n-1}$ shows that the inclusion $\Sn{n-1} \to \Dn{n}$ is nothing but
the inclusion $\eDn{n} \to \Dn{n}$, where $\eDn{n}$ is the boundary of
$\Dn{n}$ defined in terms of the direct structure of $\G$ (see for instance
paragraph~8.1.30 of~\cite{Cisinski} for a definition). The first assertion
then follows from Proposition~8.1.37 of~\cite{Cisinski}.

The second assertion follows formally from the first one by using the fact
that $L$ is a left adjoint to $U$.
\end{proof}

\begin{rem}
A similar result holds for strict \oo-categories. In particular, the trivial
fibrations of \cite{LMW} can be described without reference to spheres and
disks.
\end{rem}

\begin{thm}[Brown-Golasi\'nski, Ara-Métayer]\label{thm:BG}
The weak equivalences and the trivial fibrations of strict \oo-groupoids
define a combinatorial model category structure on the category of strict
\oo-groupoids. Moreover, every strict \oo-groupoid is fibrant in this model
category structure.
\end{thm}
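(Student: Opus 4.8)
The plan is to invoke the general machinery of cofibrantly generated (combinatorial) model categories on locally presentable categories, checking the hypotheses one by one for $\wgpds$. First I would record that $\wgpds$ is locally presentable: it is the category of models of a (multi-sorted, possibly infinitary) algebraic theory — indeed strict \oo-groupoids are algebras for a monad on the locally presentable category $\pref{\G}$ of globular sets, namely the monad $UL$ — hence is itself locally presentable, in particular complete and cocomplete with all the smallness one needs. Second, I would take as the generating set of cofibrations $I = \{\Snt{n-1} \to \Dnt{n} : n \ge 0\}$ and declare a morphism to be a \emph{cofibration} if it lies in the saturation $\mathrm{cof}(I)$, a \emph{trivial fibration} if it has the right lifting property against $I$ — which by Proposition~\ref{prop:triv_fib}(2) is exactly the notion already defined — and a \emph{weak equivalence} if it is a weak equivalence of strict \oo-groupoids in the sense of the earlier section.

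The heart of the argument is to verify the two conditions of a recognition theorem (e.g. Kan's, or Smith's theorem for combinatorial model structures): that the weak equivalences form an accessible subcategory of the arrow category satisfying the two-out-of-three property and closed under retracts, and that the maps with the right lifting property against $I$ coincide with the intersection of the fibrations and the weak equivalences, equivalently that $\mathrm{cof}(I) \cap \W$ is contained in (hence equal to) the left class determined by trivial fibrations. The two-out-of-three and retract stability of $\W$ are routine from the description of $\W$ in terms of the homotopy functors $\pi_0$ and $\pi_n(-, u, v)$ given in Proposition~\ref{prop:def_w_eq}; accessibility of $\W$ follows because each $\pi_n$ is built from filtered-colimit-preserving constructions (hom-sets in $\wgpds$ and quotients by the relation $\sim$), so $\W$ is cut out by accessible functors and is thus an accessible and accessibly embedded subcategory of $\wgpds^{\to}$.

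The genuinely hard part — and the step I expect to be the main obstacle — is showing that every trivial fibration is a weak equivalence and, conversely, that a cofibration which is a weak equivalence is a trivial fibration (equivalently, lifts against all fibrations, where fibrations are defined by right lifting against trivial cofibrations). The first direction uses the lifting property against $\Snt{n-1} \to \Dnt{n}$ to produce, for any trivial fibration $p : G \to H$, the fillers needed to see that $\pi_0(p)$ is bijective and each $\pi_n(p, u, v)$ is bijective: surjectivity from lifts against $\Snt{n-1} \to \Dnt{n}$ directly, injectivity from lifts one dimension up, exactly as in the proof of Proposition~\ref{prop:def_contr} applied relatively over $H$. The converse direction is where one must actually do homotopy theory of strict \oo-groupoids: one factors an arbitrary trivial cofibration through a relative cell complex built from the $\Snt{n-1} \to \Dnt{n}$, and shows that pushouts of these generating cofibrations along arbitrary maps, and transfinite compositions thereof, preserve weak equivalences — this is precisely the content of the Brown--Golasi\'nski analysis (and its revisiting by Ara--M\'etayer), relying on the fact, noted in the statement, that every object is fibrant, so that trivial cofibrations are exactly the maps having the left lifting property against all fibrations and one can compute with explicit cylinder/path objects. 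Granting these two lifting-class identifications, Smith's theorem yields the combinatorial model structure, and the fibrancy of every object is then immediate since the map $G \to \ast$ lifts against every trivial cofibration by the cell-complex argument just described.

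Rather than reprove all of this, I would simply cite \cite{BrownGolas} for the original construction and \cite{AraMetBrown} for the verification that the structure is combinatorial and that every object is fibrant, indicating that the model structure is the one cofibrantly generated by $I = \{\Snt{n-1} \to \Dnt{n}\}_{n \ge 0}$ with the weak equivalences and trivial fibrations as described above.
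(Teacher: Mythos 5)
Your proposal is correct and ultimately takes the same route as the paper: the theorem is proved by citation (Brown--Golasi\'nski's construction and Ara--M\'etayer's combinatorial version, together with the identification of the trivial fibrations via Proposition~\ref{prop:triv_fib} and of the weak equivalences with those of the paper), your Smith-theorem sketch being essentially an expanded account of what those references establish. One small caveat: fibrancy of every strict \oo-groupoid is not ``immediate'' from the cell-complex argument as your sketch suggests --- the paper cites Theorem~5.1 of \cite{LMW} for it --- but since you defer to the literature for this point as well, the proof stands.
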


\begin{proof}
Theorem 3.19 of \cite{AraMetBrown} asserts the existence of a combinatorial
model category structure on $\wgpds$. The weak equivalences of this model
category structure coincide with the weak equivalences of strict
\oo-groupoids defined in this article by Proposition~4.1 of loc.~cit. The
trivial fibrations of this model category structure are the morphisms having
the right lifting property with respect to the $\Snt{n-1} \to \Dnt{n}$. They
coincide with the trivial fibrations of strict \oo-groupoids defined in this
article by Proposition~\ref{prop:triv_fib}. The fact that every strict
\oo-groupoid is fibrant follows from Theorem~5.1 of~\cite{LMW}.
\end{proof}

\begin{rem}
This model category was first defined by Brown and Golasi\'nski in
\cite{BrownGolas} in terms of crossed complexes (crossed complexes are
equivalent to strict \oo-groupoids by the main result of
\cite{BrownHigginsGpdCC}). We will hence call it the Brown-Golasi\'nski model
category structure. An alternative proof and a direct description are given
in \cite{AraMetBrown} in terms of the model category structure on strict
\oo-categories defined in \cite{LMW}.
\end{rem}

\begin{rem}\label{rem:Dnt_cof}
By Proposition \ref{prop:triv_fib}, for every $n \ge 0$, the morphism
$\Snt{n-1} \to \Dnt{n}$ is a cofibration (in the Brown-Golasi\'nski model
category structure). It follows that for every~$n \ge 1$, the morphisms
$\Thst{n}, \Thtt{n} : \Dnt{n-1} \to \Dnt{n}$ are also cofibrations.  Indeed,
these morphisms are obtained as compositions
\[
\Dnt{n-1} \to \Snt{n-1} = \Dnt{n-1} \amalg_{\Snt{n-2}} \Dnt{n-1} \to \Dnt{n},
\]
where the first arrow is one of the two canonical morphisms. But these
canonical morphisms are both pushouts of $\Snt{n-2} \to \Dnt{n-1}$.
\end{rem}

\begin{tparagr}{Path objects}
Let us fix some terminology about path objects. Let $\M$ be a model category
and let $B$ be an object of $\M$. A \ndef{path object} of $B$ in $\M$ is an
object $P$ of~$\M$ endowed with a factorization
\[
\xymatrix{
B \ar[r]^r & P \ar[r]^-{(p_1, p_0)} & B \times B
}
\]
of the diagonal of $B$ as a weak equivalence followed by a fibration. 

Let $P$ be such a path object and let $f, g : A \to B$ be two morphisms of
$\M$.  A \ndef{right homotopy} from $f$ to $g$ using~$P$ is a morphism $h : A
\to P$ of $\M$ such that $p_0h = f$ and~$p_1h = g$.  The existence of such a
homotopy implies that $f$ and $g$ become equal in the homotopy category
of~$\M$. Note that we do not need the morphism $P \to B \times B$
to be a fibration for this property to hold.
\end{tparagr}

The rest of this section is devoted to the description of a functorial path
object for the Brown-Golasi\'nski model category structure.

\begin{tparagr}{Notation for iterated sources and targets}
Let $G$ be a strict \oo-groupoid and let $u$ be an $n$-arrow of $G$. We will
use the following notation for the iterated sources, targets and
identities of the $n$\nbd-arrow~$u$:
\[
u^\flat_i = 
\begin{cases}
\Gls[i]{n}(u), & \text{if $i \le n$},\\
\Glk[i]{n}(u), & \text{if $i > n$},
\end{cases}
\quad\text{and}\quad
u^\sharp_i = 
\begin{cases}
\Glt[i]{n}(u), & \text{if $i \le n$},\\
\Glk[i]{n}(u), & \text{if $i > n$}.
\end{cases}
\]
Note that by definition, we have $u = u^\flat_n = u^\sharp_n$. Here is an
illustration of this notation in the case $n = 3$:
\[
\newcommand\st\scriptstyle
\xymatrix@C=4pc@R=4pc{
\st{u^\flat_0} \ar@/^3ex/[r]_(.47){}="0"^(.53){}="10"^{u^\flat_1}
    \ar@/_3ex/[r]_(.47){}="1"^(.53){}="11"_{u^\sharp_1}
    \ar@<2ex>@2"0";"1"_{}="2"^{u^\sharp_2} \ar@<-2ex>@2"10";"11"^{}="3"_{u^\flat_2}
    \ar@3"3";"2"_{}^u
    & \st{u^\sharp_0} \pbox{.}
}
\]
\end{tparagr}
 
\begin{tparagr}{Cylinders}\label{paragr:cylinders}
Let $G$ be a strict \oo-groupoid. Let $n \ge 0$ and let $u, v$ be two
$n$-arrows of $G$. An \ndef{$n$-cylinder} $z$ from $u$ to $v$ in $G$,
denoted by $z : u \cto v$, consists of:
\begin{itemize}
  \item for every $1 \le i \le n$, two $i$-arrows 
    $z^\flat_i, z^\sharp_i$ of $G$;
  \item an $(n + 1)$-arrow $z_{n+1}$ of $G$ (which will also be
    denoted by $z^\sharp_{n+1}$ and $z^\flat_{n+1}$ for the purpose of
    getting homogeneous formulas),
\end{itemize}
whose sources and targets are given, for $1 \le i \le n + 1$ and $\epsilon =
\flat, \sharp$, by the following formulas:
\[
\begin{split}
\Gls{i}(z^\epsilon_i) & = z^\sharp_{i-1} \comp_{i-2}
\big(z^\sharp_{i-2} \comp_{i-3} \big(\cdots \comp_1
\big(z^\sharp_1 \comp_0 u^\epsilon_{i-1}\big)\big)\big),
\\
\Glt{i}(z^\epsilon_i) & = \big(\big(\big(v^\epsilon_{i-1} \comp_0
z^\flat_1\big)\comp_1 z^\flat_2\big) \comp_2 \cdots\big)
\comp_{i-2} z^\flat_{i-1}.
\end{split}
\]
Note that if $i = n + 1$, these formulas do not depend on the value of
$\epsilon$ and the definition hence makes sense. We will denote by
$\Gamma_n(G)$ the set of $n$-cylinders in $G$. Here are the diagrams
representing $n$-cylinders for $n = 0, 1, 2$:
\[
\newcommand\st\scriptstyle
\xygraph{
!{<0cm,0cm>;<1cm,0cm>:<0cm,1cm>::}
!{(0,0) }*+{\st u_0}="u"
!{(0,-2) }*+{\st v_0\pbox{,}}="v"
"u":"v"^{z_1}
}
\qquad
\qquad
\xygraph{ 
!{<0cm,0cm>;<1cm,0cm>:<0cm,1cm>::}
!{(3,0) }*+{\st u^\flat_0}="ufz"
!{(5,0)}*+{\st u^\sharp_0}="usz"
!{(3,-2) }*+{\st v^\flat_0}="vfz"
!{(5,-2)}*+{\st v^\sharp_0\pbox{,}}="vsz"
"ufz":"vfz"_{z^\flat_1}
"usz":"vsz"^{z^\sharp_1}
"ufz":"usz"^{u_1}
"vfz":"vsz"_{v_1}
"usz":@{=>}"vfz"^{z_2}
}
\qquad
\qquad
\xygraph{ 
!{<0cm,0cm>;<1.5cm,0cm>:<0cm,.9cm>::} 
!{(0,0) }*+{\st u^\flat_0}="ufz"
!{(0,-3) }*+{\st v^\flat_0}="vfz"
!{(2.5,0.6)}*+{\st u^\sharp_0}="usz"
!{(2.5,-2.4)}*+{\st v^\sharp_0\pbox{.}}="vsz"
!{(.8,0.6)}*+{}="ufo"
!{(1.7,0)}*+{}="uso"
!{(.8,-2.4)}*+{}="vfo"
!{(1.7,-3)}*+{}="vso"
!{(1.1,-1.2)}*+{}="codz"
!{(1.5,-1.4)}*+{}="domz"
"ufz":@/^.5cm/"usz"^(.3){u^\flat_1}
"ufz":@/_.55cm/"usz"_(.7){u^\sharp_1}
"ufo":@{=>}"uso"^{u_2}
"vfz":@{.>}@/^.5cm/"vsz"^(.3){v^\flat_1}
"vfz":@/_.55cm/"vsz"_(.7){v^\sharp_1}
"vfo":@{:>}"vso"^(.7){v_2}
"ufz":"vfz"_{z^\flat_1}
"usz":"vsz"^{z^\sharp_1}
"usz":@{=>}@/^.7cm/"vfz"^(.3){z^\sharp_2}
"usz":@{:>}@/_.7cm/ "vfz"_(.7){z^\flat_2}
"domz":@3{.>}"codz"_{z_3}
}
\]
\end{tparagr}

\begin{rem}
This notion of $n$-cylinder was originally defined by Métayer (in the more
general context of strict \oo-categories) in \cite{MetPolRes}. In
\cite{LafMetPol}, Lafont and Métayer give a nice inductive reformulation of
the definition (an $n$-cylinder is defined in terms of $(n-1)$\nbd-cylinders
in another strict \oo-category). These $n$-cylinders are also studied in
\cite{LMW} where they play a crucial role.
\end{rem}

\begin{tparagr}{The strict \oo-groupoid of cylinders}
Let $G$ be a strict \oo-groupoid. If $z : u \cto v$ is an $n$-cylinder with
$n > 0$, we define two $(n-1)$-cylinders
\[
\Ths{n}(z) : u^\flat_{n-1} \cto v^\flat_{n-1}
\quad\text{and}\quad
\Tht{n}(z) : u^\sharp_{n-1} \cto v^\sharp_{n-1}
\]
by setting
\[
\Ths{n}(z)^\epsilon_i = \Tht{n}(z)^\epsilon_i = z^\epsilon_i,
\quad 1 \le i \le n - 1, \quad \epsilon = \flat, \sharp,
\]
\[
 \Ths{n}(z) = z^\flat_n
 \quad\text{and}\quad
 \Tht{n}(z) = z^\sharp_n.
\]
This defines the structure of a globular set on the $\Gamma_n(G)$'s. The
resulting globular set will be denoted by $\Gamma(G)$. Métayer showed in an
appendix to \cite{MetPolRes} that $\Gamma(G)$ is naturally endowed with the
structure of a strict \oo-category (see also Appendix A of \cite{LMW}).
Moreover, this strict \oo-category is an \oo-groupoid (see Lemma 3.11 of
\cite{AraMetBrown}). From now on, we will always consider $\Gamma(G)$
endowed with this structure of a strict \oo-groupoid. The only result we
will need that uses the precise definition of this structure is
Theorem~\ref{thm:gamma_cyl}.
\end{tparagr}

\begin{tparagr}{The path object of cylinders}
Let $G$ be a strict \oo-groupoid. If $u$ is an $n$-arrow of $G$, we define
an $n$-cylinder $\tau u : u \cto u$ by setting:
\[ (\tau u)^\epsilon_i = 1_{u^\epsilon_{i-1}}, \quad 1 \le i \le n + 1, \quad
\epsilon = \flat, \sharp. \]
We obtain this way a factorization
\[ G \to \Gamma(G) \to G \times G \]
of the diagonal of $G$ in the category of globular sets. The first
morphism sends an $n$-arrow~$u$ to the $n$-cylinder $\tau u : u \cto u$.
The second morphism sends an $n$-cylinder $z : u \cto v$ to the pair of $n$-arrows
$(v, u)$.
\end{tparagr}

\begin{thm}[Lafont-Métayer-Worytkiewicz]\label{thm:gamma_cyl}
If $G$ is a strict \oo-groupoid, then the above factorization $G \to
\Gamma(G) \to G \times G$ is actually a factorization in the category of
strict \oo-groupoids and~$\Gamma(G)$ endowed with this factorization is a
path object of $G$ in the Brown-Golasi\'nski model category structure.
\end{thm}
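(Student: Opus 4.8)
The statement to prove is Theorem~\ref{thm:gamma_cyl}: for a strict \oo-groupoid $G$, the factorization $G \to \Gamma(G) \to G \times G$ of the diagonal is a factorization in $\wgpds$ and exhibits $\Gamma(G)$ as a path object in the Brown-Golasi\'nski model structure. There are really three things to check: (i) the two maps $G \to \Gamma(G)$ and $\Gamma(G) \to G \times G$ are morphisms of strict \oo-groupoids; (ii) the first map $G \to \Gamma(G)$, $u \mapsto \tau u$, is a weak equivalence; (iii) the second map $(p_1, p_0) : \Gamma(G) \to G \times G$, $z \mapsto (v, u)$, is a fibration, i.e., by Theorem~\ref{thm:BG} (every object fibrant) and the characterization of trivial fibrations, it suffices to show it is a fibration in the sense of the model structure. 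Since the Brown-Golasi\'nski structure is cofibrantly generated, I would characterize the (generating trivial) cofibrations and reduce (iii) to a lifting problem.

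\textbf{Step 1: the maps are \oo-functors.} The underlying globular-set factorization is already set up in the excerpt. For $G \to \Gamma(G)$: since $\Gamma(G)$ carries M\'etayer's strict \oo-category structure (recalled in the paragraph ``The strict \oo-groupoid of cylinders''), I must check that $u \mapsto \tau u$ commutes with all the compositions $\comp^i_j$ and the $k_i$'s. This is a direct computation using the explicit formulas for the operations on $\Gamma(G)$ from the appendix of \cite{MetPolRes}/Appendix A of \cite{LMW} together with the fact that $(\tau u)^\epsilon_i = 1_{u^\epsilon_{i-1}}$ is built entirely out of identities, which are neutral and functorial (``Functoriality of identities'' axiom). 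For $\Gamma(G) \to G \times G$: the two projections $z : u \cto v \mapsto v$ and $z \mapsto u$ are $p_1 = $ ``target corner'' and $p_0 = $ ``source corner''; functoriality again follows from the formulas defining the \oo-category structure on $\Gamma(G)$, since those formulas are designed precisely so that the corners assemble compatibly. I would present this as two short lemmas with the computations deferred to ``routine verification using the formulas of \cite{LMW}''.

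\textbf{Step 2: $G \to \Gamma(G)$ is a weak equivalence.} I would show the stronger statement that $p_0 \circ (\text{inclusion}) = \mathrm{id}_G$ and that $\Gamma(G) \to G$ (either projection) is a trivial fibration; then $G \to \Gamma(G)$ is a section of a trivial fibration, hence (by 2-out-of-3 and the fact that trivial fibrations are weak equivalences) a weak equivalence, and in fact this is the cleanest route since it also feeds Step~3. Concretely: $p_0 \tau u = u$ is immediate from the definitions. To see $p_0 : \Gamma(G) \to G$ is a trivial fibration, by Proposition~\ref{prop:triv_fib}(2) I must solve every lifting problem against $\Snt{n-1} \to \Dnt{n}$; unwinding via the adjunction $L \dashv U$, this amounts to: given an $n$-arrow $u$ of $G$ and an $(n-1)$-cylinder $z'$ over its boundary, extend $z'$ to an $n$-cylinder $z$ with $p_0 z = u$. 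One builds $z$ by taking $z^\flat_i = z^\sharp_i = (z')^\epsilon_i$ in low degrees and choosing $z_n$, $z_{n+1}$ appropriately — here is where invertibility of arrows in the \oo-groupoid $G$ is used to solve the source/target equations in the definition of $n$-cylinder. (This is essentially the argument that $\Gamma(G) \to G$ has a ``cylinder-filling'' property; it is carried out in \cite{LMW}, Theorem~5.1, and I would cite/adapt it.)

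\textbf{Step 3: $(p_1, p_0) : \Gamma(G) \to G \times G$ is a fibration.} This is the main obstacle. Since $G$ is fibrant and $G \to \Gamma(G)$ is a weak equivalence with $(p_1, p_0)$ composed with either projection equal to a trivial fibration (Step~2), a formal argument (the dual of the standard fact that the factorization $A \to \mathrm{Cyl}(A) \to A$ from a path object has the right shape) does \emph{not} quite give that $(p_1,p_0)$ itself is a fibration — one genuinely needs a lifting argument. Concretely: the generating cofibrations of the Brown-Golasi\'nski structure are (pushouts/retracts of) the $\Snt{n-1} \to \Dnt{n}$, and the generating trivial cofibrations can be taken to be a set $J$; by the model-category machinery I must show $(p_1, p_0)$ has the right lifting property against $J$. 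After translating through $L \dashv U$ and the explicit description of elements of $J$, this becomes a concrete problem about extending cylinders: given a trivial cofibration $K \to K'$ of strict \oo-groupoids, an \oo-functor $K' \to G \times G$ (a pair of \oo-functors to $G$ that agree on $K$ up to a cylinder), and a cylinder over $K$, produce a cylinder over $K'$. Since every object is fibrant and trivial cofibrations are in particular weak equivalences, the pair of \oo-functors $K' \rightrightarrows G$ are homotopic rel $K$, and one uses the \oo-groupoid structure on $\Gamma(G)$ (composition of cylinders, inverses of cylinders — recalled to exist in the paragraph ``The strict \oo-groupoid of cylinders'') to assemble the required lift. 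I expect the bulk of the work — and the part where the precise \oo-category structure on $\Gamma(G)$ from \cite{LMW} is unavoidable — to be exactly this cylinder-extension step. I would therefore organize the proof as: reduce to RLP against generating (trivial) cofibrations, translate via $L \dashv U$, and then invoke the cylinder-filling results of \cite{LMW} (Appendix~A and Theorem~5.1) to solve the resulting lifting problems, checking along the way that the maps respect the algebraic structure as in Step~1.
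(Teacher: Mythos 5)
This theorem is not proved in the paper: it is attributed to Lafont--M\'etayer--Worytkiewicz, and the paper's entire ``proof'' is a citation of Theorem~4.21 and Proposition~4.45 of \cite{LMW} (indeed, a Remark immediately afterwards points out that the fibration part of the statement is never even used later). So there is no internal argument to compare yours against step by step. Your decomposition --- (i) the two maps are \oo-functors, (ii) $G \to \Gamma(G)$ is a weak equivalence, (iii) $\Gamma(G) \to G \times G$ is a fibration --- is the right shape and is roughly how \cite{LMW} proceed, but every substantive verification in your sketch (the \oo-category structure on $\Gamma(G)$, the functoriality computations, the cylinder-extension arguments behind both the weak-equivalence and the fibration claims) is itself deferred to \cite{LMW}, so what you have is an expanded citation rather than an independent proof; judged against the paper's own standard that is acceptable, but it should be presented as such. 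Two specific soft spots if you did want to fill it in: in Step~2, your reduction (section of a trivial fibration, two-out-of-three) is fine, but the claim that $p_0 : \Gamma(G) \to G$ is a trivial fibration is exactly where the invertibility of cells in a strict \oo-groupoid must be used to solve the source/target equations in the definition of a cylinder, and the pointer to Theorem~5.1 of \cite{LMW} is not quite the right statement (that theorem is fibrancy of objects); in Step~3, the generating trivial cofibrations of the Brown--Golasi\'nski/folk structure do not have the simple globular description your translation suggests, and \cite{LMW} instead establish the fibration property by a direct lifting construction with cylinders, so the ``reduce to $J$ and unwind through $L \dashv U$'' step would need to be replaced by that explicit argument (or, again, by a citation). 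Finally, note that \cite{LMW} work with strict \oo-categories and the folk model structure, so strictly speaking one also needs the transfer to \oo-groupoids and the Brown--Golasi\'nski structure from \cite{AraMetBrown}, a point neither your sketch nor the paper's one-line proof makes explicit.
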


\begin{proof}
See Theorem 4.21 and Proposition 4.45 of \cite{LMW}.
\end{proof}

\begin{rem}
In this paper, we will not use the fact that $\Gamma(G) \to G \times G$ is a
fibration. The version of the above theorem we will need can thus be stated
only in terms of weak equivalences of strict \oo-groupoids (and in
particular, without reference to the Brown-Golasi\'nski model category
structure).
\end{rem}

\section{Disks are contractible}\label{sec:disks}

Throughout this section, we fix an integer $n \ge 0$. We will denote by $u$
the $n$-arrow of~$\Dnt{n}$ coming from the unique $n$-arrow of the globular
set $\Dn{n}$. The goal of the section is to prove that the \oo-groupoid
$\Dnt{n}$ is weakly contractible. For this purpose, we will exhibit an
$n$-cylinder from $u$ to (an iterated identity of) an object of $\Dnt{n}$.

\begin{tparagr}{A cylinder}
For $i$ such that $1 \le i \le n+1$, we define two $i$-arrows
$c^\flat_i$ and $c^\sharp_i$ of $\Dnt{n}$ by
\[
\begin{split}
c^\flat_i & = \Glid{u^\flat_0}, \\
c^\sharp_i & = {(u^\flat_1)}^{-1} \comp_0 \big({(u^\flat_2)}^{-1}
\comp_1 \big(\cdots \comp_{i-3} \big({(u^\flat_{i-1})}^{-1}
\comp_{i-2} {(u^\flat_i)}^{-1}\big)\big)\big).
\end{split}
\]
When $i = 1$, the definition of $c^\sharp_i$ should be read as $c^\sharp_1 =
{(u^\flat_1)}^{-1}$. Note also that the $(n+1)$\nbd-morphism $u^\flat_{n+1}$
is by definition equal to $\Glid{u}$. We leave to the reader the
verification that the formula defining $c^\sharp_i$ makes sense. By
definition, for $1 \le i \le n$, we have
\[
\Glt{i}(c^\sharp_i) = {(u^\flat_1)}^{-1} \comp_0 \big({(u^\flat_2)}^{-1}
\comp_1 \big(\cdots \comp_{i-3} \big({(u^\flat_{i-1})}^{-1} \comp_{i-2}
u^\flat_{i-1}\big)\big)\big).
\]
We claim that this formula simplifies to
\[ 
\Glt{i}(c^\sharp_i) = \Glid{u^\flat_0}.
\]
For $i = n+1$, we claim that we even have
\[
c^\sharp_{n+1} = \Glid{u^\flat_0}.
\]
These two claims are precisely the content of the following easy lemma:
\end{tparagr}

\begin{lemma}\label{lemma:target_cyl}
For $i$ such that $1 \le i \le n+1$, we have
\[
{(u^\flat_1)}^{-1} \comp_0 \big(
{(u^\flat_2)}^{-1} \comp_1 \big(
\cdots \comp_{i-3} \big(
{(u^\flat_{i-1})}^{-1} \comp_{i-2} u^\flat_{i-1}
\big)\big)\big) = \Glid{u^\flat_0}.
\]
\end{lemma}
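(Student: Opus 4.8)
The plan is to prove the identity by induction on $i$. The key observation is that the expression on the left-hand side is built by nesting compositions of inverses $(u^\flat_j)^{-1}$ for $j = 1, \dots, i-1$ around the innermost arrow $u^\flat_{i-1}$, and that each composition step is a $\comp^i_j$-composition that produces an iterated identity by cancellation.

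\begin{proof}
We argue by induction on $i$. For $i = 1$, the left-hand side reads $u^\flat_0$, which equals $\Glid{u^\flat_0}$ (an iterated identity of a $0$-arrow on itself), so the claim holds. For $i = 2$, the left-hand side is ${(u^\flat_1)}^{-1} \comp_0 u^\flat_1$; since $u^\flat_1$ is a $1$-arrow with source $u^\flat_0$, the definition of a $\comp^1_0$-inverse gives ${(u^\flat_1)}^{-1} \comp_0 u^\flat_1 = \Glid{u^\flat_0}$, as desired.

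For the inductive step, suppose $i \ge 2$ and that
\[
E_{i-1} := {(u^\flat_1)}^{-1} \comp_0 \big({(u^\flat_2)}^{-1} \comp_1 \big(\cdots \comp_{i-4} \big({(u^\flat_{i-2})}^{-1} \comp_{i-3} u^\flat_{i-2}\big)\big)\big) = \Glid{u^\flat_0}.
\]
We want to compute $E_i = {(u^\flat_1)}^{-1} \comp_0 \big({(u^\flat_2)}^{-1} \comp_1 \big(\cdots \comp_{i-3} \big({(u^\flat_{i-1})}^{-1} \comp_{i-2} u^\flat_{i-1}\big)\big)\big)$. The innermost composite ${(u^\flat_{i-1})}^{-1} \comp_{i-2} u^\flat_{i-1}$ is a $\comp^{i-1}_{i-2}$-composition of the $(i-1)$-arrow $u^\flat_{i-1}$ with its inverse, which by the defining property of inverses equals $\Glid{(u^\flat_{i-1})^\flat_{i-2}} = \Glid{u^\flat_{i-2}}$. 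Substituting, $E_i$ becomes
\[
{(u^\flat_1)}^{-1} \comp_0 \big({(u^\flat_2)}^{-1} \comp_1 \big(\cdots \comp_{i-3} \Glid{u^\flat_{i-2}}\big)\big),
\]
where the outer compositions are now compositions (via the identification conventions of the paper, raising everything to the appropriate top dimension) with an iterated identity. Using the identity axiom repeatedly — composing with an iterated identity of the target in the relevant direction leaves the arrow unchanged — one peels off the $\Glid{u^\flat_{i-2}}$ and is left exactly with $E_{i-1}$ (at dimension $i-1$, or rather its iterated identity at dimension $i$, which is the same thing under the conventions). By the inductive hypothesis this equals $\Glid{u^\flat_0}$, completing the induction. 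For the final case $i = n+1$, the innermost arrow is $u^\flat_{n} = u^\flat_{n+1}$'s source pattern, but since $u^\flat_{n+1} = \Glid{u}$ the same cancellation applies verbatim, giving $c^\sharp_{n+1} = \Glid{u^\flat_0}$.
\end{proof}

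The step I expect to require the most care is the bookkeeping in the inductive reduction: one must check that after replacing the innermost composite by $\Glid{u^\flat_{i-2}}$, the remaining nested expression genuinely matches $E_{i-1}$ — this uses both the identity axiom and the functoriality of identities (to commute iterated identities past the $\comp_j$'s) together with the paper's convention that $u \comp_k v$ silently raises dimensions via $\Glk[m]{}$. This is routine but must be done cleanly; none of it presents a real obstacle, which is presumably why the authors call the lemma "easy."
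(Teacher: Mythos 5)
Your proof is correct and follows essentially the same route as the paper's: induction on $i$, cancelling the innermost pair ${(u^\flat_{i-1})}^{-1} \comp_{i-2} u^\flat_{i-1}$ by the inverse axiom and identifying what remains with the expression for $i-1$ via the identity conventions and functoriality of identities. The only difference is that you spell out a bit of the identity bookkeeping that the paper leaves implicit in its conventions (and your parenthetical dimension count is off by one, harmlessly).
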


\begin{proof}
The result is obvious for $i = 1$. For $i \ge 2$, we have
\[
\begin{split}
\MoveEqLeft {(u^\flat_1)}^{-1} \comp_0 \big(
{(u^\flat_2)}^{-1} \comp_1 \big(
\cdots \comp_{i-4} \big(
{(u^\flat_{i-2})}^{-1} \comp_{i-3} \big(
{(u^\flat_{i-1})}^{-1} \comp_{i-2} u^\flat_{i-1}
\big)\big)\big)\big)\\
& =
{(u^\flat_1)}^{-1} \comp_0 \big(
{(u^\flat_2)}^{-1} \comp_1 \big(
\cdots \comp_{i-4} \big(
{(u^\flat_{i-2})}^{-1} \comp_{i-3} u^\flat_{i-2}
\big)\big)\big)\\
\end{split}
\]
and the result follows by induction.
\end{proof}

\begin{tparagr}{A cylinder (sequel)}
By the previous lemma, for $i$ such that $1 \le i \le n + 1$, the sources
and targets of the $c^\epsilon_i$'s are given by
\[
\begin{split}
c^\flat_i & : \Glid{u^\flat_0} \to \Glid{u^\flat_0}, \\
c^\sharp_i & : {(u^\flat_1)}^{-1} \comp_0 \big({(u^\flat_2)}^{-1} \comp_1
\big(\cdots \comp_{i-3} \big({(u^\flat_{i-1})}^{-1} \comp_{i-2}
{(u^\sharp_{i-1})}^{-1}\big)\big)\big)
\to \Glid{u^\flat_0}.
\end{split}
\]
Moreover, we have
\[ c^\flat_{n+1} = \id{u^\flat_0} = c^\sharp_{n+1}. \]
This morphism will now simply be denoted by $c_{n+1}$.

Here are the corresponding diagrams for $n = 0, 1, 2$:
\[
\newcommand\st\scriptstyle
\xygraph{
!{<0cm,0cm>;<1cm,0cm>:<0cm,1cm>::}
!{(0,0) }*+{\st u^\flat_0}="u"
!{(0,-2) }*+{\st u^\flat_0\pbox{,}}="v"
"u":"v"^{\Glid{u^\flat_0}}
}
\qquad
\qquad
\xygraph{
!{<0cm,0cm>;<1cm,0cm>:<0cm,1cm>::}
!{(3,0) }*+{\st u^\flat_0}="ufz"
!{(5,0)}*+{\st u^\sharp_0}="usz"
!{(3,-2) }*+{\st u^\flat_0}="vfz"
!{(5,-2)}*+{\st u^\flat_0\pbox{,}}="vsz"
"ufz":"vfz"_{\Glid{u^\flat_0}}
"usz":"vsz"^{u_1^{-1}}
"ufz":"usz"^{u_1}
"vfz":"vsz"_{\Glid{u^\flat_0}}
"usz":@{=>}"vfz"^{\Glid{u^\flat_0}}
}
\qquad
\qquad
\xygraph{ 
!{<0cm,0cm>;<1.5cm,0cm>:<0cm,0.9cm>::} 
!{(0,0) }*+{\st u^\flat_0}="ufz"
!{(0,-3) }*+{\st u^\flat_0}="vfz"
!{(2.5,0.6)}*+{\st u^\sharp_0}="usz"
!{(2.5,-2.4)}*+{\st u^\flat_0\pbox{.}}="vsz"
!{(.8,0.6)}*+{}="ufo"
!{(1.7,0)}*+{}="uso"
!{(.8,-2.4)}*+{}="vfo"
!{(1.7,-3)}*+{}="vso"
!{(1.1,-1.2)}*+{}="codz"
!{(1.5,-1.4)}*+{}="domz"
"ufz":@/^.5cm/"usz"^(.3){u^\flat_1}
"ufz":@/_.55cm/"usz"_(.7){u^\sharp_1}
"ufo":@{=>}"uso"^{u_2}
"vfz":@{.>}@/^.5cm/"vsz"^(.3){\Glid{u^\flat_0}}
"vfz":@/_.55cm/"vsz"_(.7){\Glid{u^\flat_0}}
"vfo":@{:>}"vso"^(.7){\Glid{u^\flat_0}}
"ufz":"vfz"_{\Glid{u^\flat_0}}
"usz":"vsz"^{{(u^\flat_1)}^{-1}}
"usz":@{=>}@/^.7cm/"vfz"^(.4){\,\scriptscriptstyle {(u^\flat_1)}^{-1} \comp_0 {(u^\flat_2)}^{-1}}
"usz":@{:>}@/_.7cm/ "vfz"_(.7){\Glid{u^\flat_0}}
"domz":@3{.>}"codz"_{\Glid{u^\flat_0}}
}
\]
\end{tparagr}

\begin{prop}\label{prop:ext_cyl}
The arrows $c^\flat_0, c^\sharp_0, \ldots, c^\flat_n, c^\sharp_n, c_{n+1}$
define an $n$-cylinder $c : u \cto \Glid{u^\flat_0}$.
\end{prop}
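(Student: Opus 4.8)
The plan is to verify directly that the family $c^\flat_0, c^\sharp_0, \ldots, c^\flat_n, c^\sharp_n, c_{n+1}$ satisfies the defining equations of an $n$-cylinder from $u$ to $\Glid{u^\flat_0}$, as given in paragraph \ref{paragr:cylinders}. Concretely, writing $v = \Glid{u^\flat_0}$ so that $v^\epsilon_i = \Glid{u^\flat_0}$ for all $i$ and $\epsilon$, one must check, for each $1 \le i \le n+1$ and each $\epsilon = \flat, \sharp$, the two formulas
\[
\Gls{i}(c^\epsilon_i) = c^\sharp_{i-1} \comp_{i-2}\big(c^\sharp_{i-2} \comp_{i-3}\big(\cdots \comp_1 \big(c^\sharp_1 \comp_0 u^\epsilon_{i-1}\big)\big)\big)
\]
and
\[
\Glt{i}(c^\epsilon_i) = \big(\big(\big(v^\epsilon_{i-1} \comp_0 c^\flat_1\big)\comp_1 c^\flat_2\big)\comp_2 \cdots\big)\comp_{i-2} c^\flat_{i-1} = \Glid{u^\flat_0},
\]
the last equality because all the $c^\flat_j = \Glid{u^\flat_0}$ and $v^\epsilon_{i-1} = \Glid{u^\flat_0}$, so every composite collapses to an iterated identity. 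So the target equations are essentially automatic once one knows $\Glt{i}(c^\epsilon_i) = \Glid{u^\flat_0}$, which for $\epsilon = \flat$ is immediate and for $\epsilon = \sharp$ is precisely Lemma~\ref{lemma:target_cyl} (applied with $u^\flat_{i-1}$ in the innermost slot), as already observed in the paragraph preceding Proposition~\ref{prop:ext_cyl}.

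The substance is therefore the source equations. For $\epsilon = \flat$ we have $c^\flat_i = \Glid{u^\flat_0}$, so $\Gls{i}(c^\flat_i) = \Glid{u^\flat_0}$, and one must check that the right-hand side $c^\sharp_{i-1} \comp_{i-2}(\cdots \comp_0 (c^\sharp_1 \comp_0 u^\flat_{i-1}))$ also reduces to $\Glid{u^\flat_0}$; but since $\Glt{}$ applied at each level of this composite is governed by Lemma~\ref{lemma:target_cyl}, this is again the same collapsing phenomenon, matching the fact that $\Gls{}$ of the left side of a composite $c^\sharp_{i-1} \comp_{i-2} (\cdots)$ equals the source of $c^\sharp_{i-1}$, which was computed to be $\Glid{u^\flat_0}$. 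For $\epsilon = \sharp$ the two sides genuinely involve the $u^\flat_j$'s: the left side is, by the explicit formula for $\Gls{i}(c^\sharp_i)$, the expression
\[
{(u^\flat_1)}^{-1} \comp_0 \big({(u^\flat_2)}^{-1}\comp_1\big(\cdots\comp_{i-3}\big({(u^\flat_{i-1})}^{-1}\comp_{i-2}{(u^\flat_i)}^{-1}\big)\big)\big)\comp_{\,?}\ \cdots
\]
Wait—more carefully, $\Gls{i}(c^\sharp_i)$ should be read off from the definition of $c^\sharp_i$ as an iterated composite, using the source rules for $\comp^i_j$; the point is to show this equals $c^\sharp_{i-1}\comp_{i-2}(\cdots\comp_0(c^\sharp_1 \comp_0 u^\sharp_{i-1}))$. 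I would prove this by induction on $i$, peeling off the outermost composition $\comp_{i-2}$ in the definition of $c^\sharp_i$ and in the cylinder formula simultaneously, and using the globular source/target compatibility axioms of a strict \oo-category together with Lemma~\ref{lemma:target_cyl} to match the two remaining inner expressions.

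The main obstacle will be purely bookkeeping: correctly unwinding $\Gls{i}$ of the nested composite defining $c^\sharp_i$ using the case distinction ($j = i-1$ versus $j < i-1$) in the source/target axioms, and then reconciling the resulting expression — which is built from the $(u^\flat_j)^{-1}$ and certain $u^\flat_j$ — with the cylinder's prescribed source, which is built from the $c^\sharp_k$'s and $u^\sharp_{i-1}$. The key identity making this work is that $\Gls{j+1}(c^\sharp_{j+1})$ and $c^\sharp_j$ differ only by the replacement of the innermost $(u^\flat_{j+1})^{-1}$ by $u^\flat_j$, whose target is $\Glid{u^\flat_0}$ by Lemma~\ref{lemma:target_cyl}; so inductively the whole cylinder right-hand side telescopes down to match. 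I expect the verification to go through mechanically once the induction hypothesis is set up with enough care about the indices, and I would relegate the explicit index chase to "we leave the routine verification to the reader," in keeping with the style of the surrounding lemmas.
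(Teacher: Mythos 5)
You set the proposition up correctly: the target equations collapse to composites of iterated identities, and the substance is the source equations. But the core of the verification is exactly the step you defer to the reader, and your sketch of it would not go through as described. For $\epsilon=\flat,\sharp$ the cylinder axiom demands an equality of $(i-1)$-\emph{arrows}
\[
\Gls{i}(c^\epsilon_i)
\;=\;
c^\sharp_{i-1}\comp_{i-2}\big(c^\sharp_{i-2}\comp_{i-3}\big(\cdots\comp_1\big(c^\sharp_1\comp_0 u^\epsilon_{i-1}\big)\big)\big),
\]
whose right-hand side is a composite of the nested arrows $c^\sharp_j$ glued along the various $\comp_{j-1}$, while the left-hand side (computed from the definition of $c^\epsilon_i$ via the source/target compatibility axioms) is the single nested expression
${(u^\flat_1)}^{-1}\comp_0\big({(u^\flat_2)}^{-1}\comp_1\big(\cdots\comp_{i-3}\big({(u^\flat_{i-1})}^{-1}\comp_{i-2}u^\epsilon_{i-1}\big)\big)\big)$ (for $\epsilon=\flat$ one further collapses this to $\Glid{u^\flat_0}$ by Lemma~\ref{lemma:target_cyl}). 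Identifying these two expressions is not source/target bookkeeping: it is an algebraic identity between parallel arrows, and the paper isolates it as Lemma~\ref{lemma:comp_cyl}, proved by induction on the number of factors where each step requires $j-1$ applications of the \emph{exchange law} to merge two nested composites joined along $\comp_{j-1}$ into one. Your proposal never invokes the exchange law, and "globular source/target compatibility plus Lemma~\ref{lemma:target_cyl}" cannot by itself prove an equality of arrows that merely happen to be parallel; this is the genuine gap.

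The heuristics you offer in place of that lemma are also off in the details, which suggests the "mechanical bookkeeping" would not in fact close. The passage from $c^\sharp_{j+1}$ to $\Gls{j+1}(c^\sharp_{j+1})$ replaces the innermost factor ${(u^\flat_{j+1})}^{-1}$ by its source $u^\sharp_j$, not by $u^\flat_j$, and Lemma~\ref{lemma:target_cyl} does not say that the target of $u^\flat_j$ is an identity; it evaluates one specific composite of the ${(u^\flat_k)}^{-1}$'s against $u^\flat_{j}$ using the inverse axioms. Likewise, in the case $\epsilon=\flat$ it is not enough to note that sources match (and the source of $c^\sharp_{i-1}$ is in any case not $\Glid{u^\flat_0}$ --- its target is): one must prove that the entire composite $c^\sharp_{i-1}\comp_{i-2}\big(\cdots\comp_1\big(c^\sharp_1\comp_0 u^\flat_{i-1}\big)\big)$ \emph{equals} $\Glid{u^\flat_0}$, which again needs Lemma~\ref{lemma:comp_cyl} (exchange law) followed by Lemma~\ref{lemma:target_cyl} (inverses). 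In short, your reduction of the proposition to the source equations agrees with the paper, but the missing interchange-law computation is the technical heart of the argument and cannot be waved through.
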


\begin{proof}
We have to check that for $1 \le i \le n+1$ and $\epsilon = \flat,
\sharp$, we have
\[
\begin{split}
\Gls{i}(c^\epsilon_i) & = c^\sharp_{i-1} \comp_{i-2}
\big(c^\sharp_{i-2} \comp_{i-3} \big(\cdots \comp_1
\big(c^\sharp_1 \comp_0 u^\epsilon_{i-1}\big)\big)\big),
\\
\Glt{i}(c^\epsilon_i) & = \big(\big(\big(\Glid{u^\flat_0} \comp_0
c^\flat_1\big)\comp_1 c^\flat_2\big) \comp_2 \cdots\big)
\comp_{i-2} c^\flat_{i-1}.
\end{split}
\]
The second equality expands to
\[
\Glid{u^\flat_0} =
\Glid{u^\flat_0} \comp^{i-1}_0 \big(
\Glid{u^\flat_0} \comp^{i-1}_1 \big(
\comp_2 \cdots \big(
\Glid{u^\flat_0} \comp^{i-1}_{i-2} \Glid{u^\flat_0}
\big)\big)\big),
\]
which is obviously true. The first one is exactly the content (modulo Lemma
\ref{lemma:target_cyl} for the case $\epsilon = \flat$)
of the following lemma applied to $j = i - 1$:
\end{proof}

\begin{lemma}\label{lemma:comp_cyl}
For $i$ and $j$ such that $0 \le j < i \le n + 1$ and $\epsilon = \flat,
\sharp$, we have
\[
\begin{split}
\MoveEqLeft 
c^\sharp_j \comp_{j-1} \big(
c^\sharp_{j-1} \comp_{j-2} \big(
\cdots
\comp_1 \big(
c^\sharp_1 \comp_0 u^\epsilon_{i-1}
\big)\big)\big)\\
& =
{(u^\flat_1)}^{-1} \comp_0 \big(
{(u^\flat_2)}^{-1} \comp_1 \big(
\cdots
\comp_{j-2} \big(
{(u^\flat_j)}^{-1} \comp_{j-1} u^\epsilon_{i-1}
\big)\big)\big).
\end{split}
\]
\end{lemma}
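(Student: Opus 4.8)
The plan is to argue by induction on $j$. The base case $j=0$ is vacuous, since both sides then reduce to $u^\epsilon_{i-1}$; for $j=1$ both sides equal $(u^\flat_1)^{-1}\comp_0 u^\epsilon_{i-1}$ because $c^\sharp_1=(u^\flat_1)^{-1}$. For the inductive step I would first apply the induction hypothesis (legitimately, since $j-1<i$) to the inner composite
\[
c^\sharp_{j-1}\comp_{j-2}\big(c^\sharp_{j-2}\comp_{j-3}\big(\cdots\comp_1\big(c^\sharp_1\comp_0 u^\epsilon_{i-1}\big)\big)\big),
\]
rewriting it as the ``staircase'' $(u^\flat_1)^{-1}\comp_0\big((u^\flat_2)^{-1}\comp_1\big(\cdots\comp_{j-3}\big((u^\flat_{j-1})^{-1}\comp_{j-2}u^\epsilon_{i-1}\big)\big)\big)$. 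Substituting this together with the definition $c^\sharp_j=(u^\flat_1)^{-1}\comp_0\big((u^\flat_2)^{-1}\comp_1\big(\cdots\comp_{j-2}(u^\flat_j)^{-1}\big)\big)$, the left-hand side is exhibited as the $\comp_{j-1}$-composite of two such staircases, both of which ``branch'' at the levels $0,1,\dots,j-2$.

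I would then finish by peeling these two staircases apart with the exchange law, working from the outside in. At the $k$-th step one meets an expression of the shape $\big((u^\flat_k)^{-1}\comp_{k-1}X\big)\comp_{j-1}\big((u^\flat_k)^{-1}\comp_{k-1}Y\big)$ with $1\le k\le j-1$; applying the exchange law (the two composition levels being $k-1<j-1$) turns it into $\big((u^\flat_k)^{-1}\comp_{j-1}(u^\flat_k)^{-1}\big)\comp_{k-1}\big(X\comp_{j-1}Y\big)$, and since $(u^\flat_k)^{-1}$ is a $k$-arrow while $j-1\ge k$, the factor $(u^\flat_k)^{-1}\comp_{j-1}(u^\flat_k)^{-1}$ is the composite of an iterated identity with itself at a level at least as high as its base dimension, hence collapses (by the Identities axiom) to that iterated identity of $(u^\flat_k)^{-1}$. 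Iterating for $k=1,2,\dots,j-1$ strips off successively the factors $(u^\flat_1)^{-1},\dots,(u^\flat_{j-1})^{-1}$ and, at the last step, pairs $(u^\flat_j)^{-1}$ with $u^\epsilon_{i-1}$, leaving precisely
\[
(u^\flat_1)^{-1}\comp_0\big((u^\flat_2)^{-1}\comp_1\big(\cdots\comp_{j-2}\big((u^\flat_j)^{-1}\comp_{j-1}u^\epsilon_{i-1}\big)\big)\big),
\]
which is the right-hand side.

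The hard part is not the algebraic idea but the bookkeeping, exactly as in Lemma~\ref{lemma:target_cyl}: one must check that every intermediate expression is well formed, i.e.\ that at each invocation of the exchange law the four arrows genuinely lie in the relevant iterated fibre product (all of them lying, after padding, in dimension $i-1$). This reduces to a routine computation of iterated sources and targets, for which the only inputs are that $(u^\flat_k)^{-1}$ has source $u^\sharp_{k-1}$ and target $u^\flat_{k-1}$ and becomes the iterated identity of $u^\flat_{k-1}$, resp.\ of $u^\sharp_{k-1}$, under $\Gls[m]{k}$, resp.\ $\Glt[m]{k}$, for $m<k$; the identities $\Gls[m]{i-1}(u^\epsilon_{i-1})=u^\flat_m$ and $\Glt[m]{i-1}(u^\epsilon_{i-1})=u^\sharp_m$ for $m<i-1$; the coglobular relations; and the precategory rules giving the source and target of a composite. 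The hypothesis $j<i$ enters precisely here, ensuring that $u^\epsilon_{i-1}$ has dimension at least $j$, so that $(u^\flat_j)^{-1}\comp_{j-1}u^\epsilon_{i-1}$ is defined and composable, with $u^\sharp_{j-1}$ appearing as the $(j-1)$-target of $u^\epsilon_{i-1}$. Since the right-hand side still contains $u^\epsilon_{i-1}$ unchanged, the whole argument is uniform in $\epsilon=\flat,\sharp$.
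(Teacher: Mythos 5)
Your proposal is correct and follows essentially the same route as the paper: induction on $j$ (with $i$ fixed), applying the induction hypothesis to the inner composite, expanding the definition of $c^\sharp_j$, and then peeling the two staircases apart by $j-1$ applications of the exchange law, the common factors $(u^\flat_k)^{-1}$ collapsing by the unit axiom under the padding convention. Your treatment is only slightly more explicit than the paper's about the unit collapse and the composability bookkeeping, but the argument is the same.
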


\begin{proof}
Fix $i$ such that $1 \le i \le n + 1$. We prove the result by induction on
$j$. For $j = 0$, the result is obvious.  For $j \ge 1$, we have
\begin{align*}
\MoveEqLeft
c^\sharp_j \comp_{j-1} \big(
c^\sharp_{j-1} \comp_{j-2} \big(
\cdots
\comp_1 \big(
c^\sharp_1 \comp_0 u^\epsilon_{i-1}
\big)\big)\big)\\
& =
c^\sharp_j \comp_{j-1} \big(
{(u^\flat_1)}^{-1} \comp_0 \big(
{(u^\flat_2)}^{-1} \comp_1 \big(
\cdots
\comp_{j-3} \big(
{(u^\flat_{j-1})}^{-1} \comp_{j-2} u^\epsilon_{i-1}
\big)\big)\big)\big)\\
& =
\big(
{(u^\flat_1)}^{-1} \comp_0 \big({(u^\flat_2)}^{-1}
\comp_1 \big(\cdots \comp_{j-3} \big({(u^\flat_{j-1})}^{-1}
\comp_{j-2} {(u^\flat_j)}^{-1}\big)\big)\big)\big)
\comp_{j-1} \\*
& \qquad\quad
\big(
{(u^\flat_1)}^{-1} \comp_0 \big(
{(u^\flat_2)}^{-1} \comp_1 \big(
\cdots
\comp_{j-3} \big(
{(u^\flat_{j-1})}^{-1} \comp_{j-2} u^\epsilon_{i-1}
\big)\big)\big)\big),
\end{align*}
where the first equality is obtained by induction and the second one by
expanding the definition of $c^\sharp_j$. The result is then obtained by
applying $j-1$ times the exchange law:
\begin{align*}
\MoveEqLeft
\big(
{(u^\flat_1)}^{-1} \comp_0 \big({(u^\flat_2)}^{-1}
\comp_1 \big(\cdots \comp_{j-3} \big({(u^\flat_{j-1})}^{-1}
\comp_{j-2} {(u^\flat_j)}^{-1}\big)\big)\big)\big)
\comp_{j-1} \\*
& \qquad\quad
\big(
{(u^\flat_1)}^{-1} \comp_0 \big(
{(u^\flat_2)}^{-1} \comp_1 \big(
\cdots
\comp_{j-3} \big(
{(u^\flat_{j-1})}^{-1} \comp_{j-2} u^\epsilon_{i-1}
\big)\big)\big)\big)\\
& =
{(u^\flat_1)}^{-1} \comp_0 
\Big[
\big({(u^\flat_2)}^{-1}
\comp_1 \big(\cdots \comp_{j-3} \big({(u^\flat_{j-1})}^{-1}
\comp_{j-2} {(u^\flat_j)}^{-1}\big)\big)\big)
\comp_{j-1} \\*
& \qquad\qquad\qquad\quad
\big(
{(u^\flat_2)}^{-1} \comp_1 \big(
\cdots
\comp_{j-3} \big(
{(u^\flat_{j-1})}^{-1} \comp_{j-2} u^\epsilon_{i-1}
\big)\big)\big)\Big]\\
& =
{(u^\flat_1)}^{-1} \comp_0 
{(u^\flat_2)}^{-1} \comp_1
\Big[
\big({(u^\flat_3)}^{-1}
\comp_2 \big(\cdots \comp_{j-3} \big({(u^\flat_{j-1})}^{-1}
\comp_{j-2} {(u^\flat_j)}^{-1}\big)\big)\big)
\comp_{j-1} \\*
& \qquad\qquad\qquad\qquad\qquad\quad
\big(
{(u^\flat_3)}^{-1} \comp_2 \big(
\cdots
\comp_{j-3} \big(
{(u^\flat_{j-1})}^{-1} \comp_{j-2} u^\epsilon_{i-1}
\big)\big)\big)\Big]\\
& = \cdots\\
& =
{(u^\flat_1)}^{-1} \comp_0 \big(
{(u^\flat_2)}^{-1} \comp_1 \big(
\cdots
\comp_{j-2} \big(
{(u^\flat_j)}^{-1} \comp_{j-1} u^\epsilon_{i-1}
\big)\big)\big).
\end{align*}
\end{proof}

\begin{thm}\label{thm:Dnt_contr}
The strict \oo-groupoid $\Dnt{n}$ is weakly contractible.
\end{thm}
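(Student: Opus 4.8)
The plan is to turn the $n$\nbd-cylinder $c\colon u\cto\Glid{u^\flat_0}$ of Proposition~\ref{prop:ext_cyl} into a right homotopy, in the Brown--Golasi\'nski model structure (Theorem~\ref{thm:BG}), between the identity functor of $\Dnt{n}$ and a constant functor, and then to deduce weak contractibility by a formal argument in the homotopy category.

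Concretely, I would first observe that, since $\Dnt{n}=L(\Dn{n})$ corepresents $n$\nbd-arrows, a morphism of strict \oo-groupoids $\Dnt{n}\to\Gamma(\Dnt{n})$ is the same datum as an $n$\nbd-arrow of $\Gamma(\Dnt{n})$, that is, as an $n$\nbd-cylinder in $\Dnt{n}$; let $h\colon\Dnt{n}\to\Gamma(\Dnt{n})$ be the one associated to $c$. Writing $p_0,p_1\colon\Gamma(\Dnt{n})\to\Dnt{n}$ for the two components of the map $\Gamma(\Dnt{n})\to\Dnt{n}\times\Dnt{n}$ (so that $p_0$ picks out the source and $p_1$ the target of a cylinder), naturality of the corepresentation shows that $p_0h$ and $p_1h$ correspond respectively to the source $u$ and the target $\Glid{u^\flat_0}$ of $c$. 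As $u$ corresponds to the identity of $\Dnt{n}$, we get $p_0h=\mathrm{id}_{\Dnt{n}}$, whereas $p_1h$ is the morphism $\kappa\colon\Dnt{n}\to\Dnt{n}$ corresponding to $\Glid{u^\flat_0}$. Because morphisms of strict \oo-groupoids commute with identities, $\kappa$ factors as $\Dnt{n}\xrightarrow{\,q\,}\Dnt{0}\xrightarrow{\,s\,}\Dnt{n}$, where $\Dnt{0}=L(\Dn{0})$ is easily seen to be the terminal strict \oo-groupoid, $q$ is the unique morphism to it, and $s$ corresponds to the object $u^\flat_0$.

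By Theorem~\ref{thm:gamma_cyl}, $\Gamma(\Dnt{n})$ together with this factorization is a path object of $\Dnt{n}$, so $h$ exhibits $\mathrm{id}_{\Dnt{n}}$ and $\kappa$ as right homotopic; hence they coincide in $\Ho(\wgpds)$. Since $\Dnt{0}$ is terminal we have $qs=\mathrm{id}_{\Dnt{0}}$, so in $\Ho(\wgpds)$ the classes of $q$ and $s$ satisfy $[s][q]=[\kappa]=\mathrm{id}$ and $[q][s]=\mathrm{id}$; thus $[q]$ is an isomorphism. As a morphism of a model category is a weak equivalence exactly when its image in the homotopy category is invertible, $q\colon\Dnt{n}\to\Dnt{0}$ is a weak equivalence, and since $\Dnt{0}$ is the terminal strict \oo-groupoid, this is precisely the statement that $\Dnt{n}$ is weakly contractible. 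The only remaining steps are routine: verifying that $h$ is indeed a morphism of strict \oo-groupoids (immediate from corepresentability, using that $\Gamma(\Dnt{n})$ carries a strict \oo-groupoid structure) and the bookkeeping that identifies $p_0h$, $p_1h$ and $\kappa$. There is no genuine obstacle at this stage: all the substance has been absorbed into Proposition~\ref{prop:ext_cyl} (the construction of the cylinder) and Theorem~\ref{thm:gamma_cyl} (the path object of cylinders), and the only point requiring care is recognizing $p_1h$ as a constant functor factoring through the terminal \oo-groupoid.
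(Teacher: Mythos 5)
Your proposal is correct and takes essentially the same route as the paper: the cylinder of Proposition~\ref{prop:ext_cyl} is converted, via corepresentability and the path object $\Gamma(\Dnt{n})$ of Theorem~\ref{thm:gamma_cyl}, into a right homotopy between $\mathrm{id}_{\Dnt{n}}$ and the constant morphism at $u^\flat_0$. The only difference is that you spell out the formal deduction the paper leaves implicit in its ``it suffices'' step (factoring the constant morphism through the terminal \oo-groupoid $\Dnt{0}$ and invoking saturation of weak equivalences in the homotopy category, exactly as anticipated in Remark~\ref{rem:str_sat}).
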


\begin{proof}
If suffices to show that the identify morphism $\Dnt{n} \to \Dnt{n}$ is
right homotopic to some constant morphism $\Dnt{n} \to \Dnt{n}$.
We will denote by $u^\flat_0$ the constant morphism $\Dnt{n} \to \Dnt{n}$
corresponding to the object $u^\flat_0$ of $\Dnt{n}$. By Theorem
\ref{thm:gamma_cyl}, it suffices to define a morphism $k : \Dnt{n} \to
\Gamma(\Dnt{n})$ making the diagram
\[
\xymatrix@R=1.5pc@C=3pc{
& & \Dnt{n} \\
\Dnt{n} \ar@/^/[urr]^{\id{\Dnt{n}}} \ar@/_/[drr]_{u^\flat_0} \ar[r]^k & \Gamma(\Dnt{n}) \ar[ru]_{p_0}
\ar[rd]^{p_1} \\
& & \Dnt{n}
}
\]
commute. But the data of such a morphism $k$ is clearly equivalent to the
data of an $n$-cylinder from $u$ to $\Glid{u^\flat_0}$. The result thus
follows from Proposition \ref{prop:ext_cyl}.
\end{proof}

\begin{rem}\label{rem:str_sat}
Let $\C$ be a category and let $\W$ be a class of morphisms of $\C$. Denote
by $p : C \to C[\W^{-1}]$ the localization functor. Recall that $\W$ is said
to be \ndef{strongly saturated} if every morphism $f$ of $C$ such that
$p(f)$ is an isomorphism is in $\W$. A sufficient condition for $\W$ to be
strongly saturated is that the pair $(\C, \W)$ is \ndef{Quillenisable},
i.e., that there exists a model category structure on $\C$ whose weak
equivalences are the elements of $\W$.

The proof that $\Dnt{n}$ is weakly contractible can be written so that the
Brown-Golasi\'nski model category structure is only used to prove that the
class of weak equivalences of strict \oo-groupoids is strongly saturated. In
particular, its cofibrations and fibrations play no role in this proof.
\end{rem}

\section{The globular extension $\Thtld$ is canonically
contractible}\label{sec:thtld_can_contr}

\begin{thm}\label{thm:LGT_contr}
Let $T$ be an object of $\Thtld$. Then the strict \oo-groupoid $L(G_T)$ is
weakly contractible.
\end{thm}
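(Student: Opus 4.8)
The plan is to reduce the general statement to the case of a disk, which was settled in Theorem~\ref{thm:Dnt_contr}, by exploiting the Brown-Golasi\'nski model category structure on $\wgpds$ (Theorem~\ref{thm:BG}). The key observation is that for a table of dimensions $T$, the globular set $G_T$ is an iterated pushout of representables $\Dn{m}$ along the boundary inclusions $\Sn{m-1}\to\Dn{m}$; concretely, $G_T$ is built from the empty globular set by successively gluing disks. Applying the left adjoint $L$, which preserves colimits, expresses $L(G_T)$ as an iterated pushout of the $\Dnt{m}$'s along the maps $\Snt{m-1}\to\Dnt{m}$, which are cofibrations by Remark~\ref{rem:Dnt_cof} (and hence so is $\varnothing\to L(G_T)$). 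So $L(G_T)$ is a cofibrant object obtained by gluing together weakly contractible objects along cofibrations between cofibrant objects.

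First I would make precise the cellular decomposition of $G_T$. For $T=\tabdim$, the globular sum $G_T=\Dn{i_1}\amalgd{i'_1}\cdots\amalgd{i'_{k-1}}\Dn{i_k}$ is assembled from $k$ disks glued along spheres $\Dn{i'_l}$, and each such disk $\Dn{m}$ is itself filtered by its skeleta $\Sn{-1}\subseteq\Sn{0}\subseteq\cdots\subseteq\Sn{m-1}\subseteq\Dn{m}$, with each inclusion a pushout of some $\Sn{j-1}\to\Dn{j}$. Combining these, $G_T$ admits a finite filtration $\varnothing=F_0\subseteq F_1\subseteq\cdots\subseteq F_N=G_T$ in which each $F_{p-1}\to F_p$ is a pushout of a boundary inclusion $\Sn{j_p-1}\to\Dn{j_p}$. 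Applying $L$ gives a filtration of $L(G_T)$ by strict \oo-groupoids with $L(F_{p-1})\to L(F_p)$ a pushout of $\Snt{j_p-1}\to\Dnt{j_p}$, hence a cofibration, and all $L(F_p)$ cofibrant.

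Then I would run an induction on $p$ showing each $L(F_p)$ is weakly contractible. The base case $L(F_0)=L(\varnothing)$ is the terminal (empty-support, i.e.\ one point? — rather: it is the initial \oo-groupoid, which here is a single object) — more carefully, one starts the induction at $F_1=\Dn{j_1}$, with $L(\Dn{j_1})=\Dnt{j_1}$ weakly contractible by Theorem~\ref{thm:Dnt_contr}. For the inductive step we have a pushout square
\[
\xymatrix@C=3pc@R=1.5pc{
\Snt{j_p-1} \ar[r] \ar[d] & L(F_{p-1}) \ar[d] \\
\Dnt{j_p} \ar[r] & L(F_p)
}
\]
with the left map a cofibration, all four objects cofibrant, and (by induction and by Theorem~\ref{thm:Dnt_contr}) $L(F_{p-1})$, $\Dnt{j_p}$, and $\Snt{j_p-1}$ — wait: $\Snt{j-1}=\Dnt{j-1}\amalg_{\Snt{j-2}}\Dnt{j-1}$, which by the same induction on $j$ is weakly contractible since it is a pushout of two copies of the contractible $\Dnt{j-1}$ along the contractible $\Snt{j-2}$. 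So all three corners other than $L(F_p)$ are weakly contractible; since the square is a homotopy pushout (left-hand map a cofibration between cofibrant objects, everything cofibrant) and the homotopy pushout of a diagram of weakly contractible objects is weakly contractible, we conclude $L(F_p)$ is weakly contractible. Taking $p=N$ yields the theorem.

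The main obstacle I anticipate is the gluing lemma: one needs that a pushout along a cofibration of a diagram in which all three objects are weakly contractible yields a weakly contractible object. This is a standard consequence of left properness of the model structure (a combinatorial model category is left proper when all objects are cofibrant, which holds here by Theorem~\ref{thm:BG} together with the fact that cofibrant objects are closed under the relevant colimits), but since the paper wants to minimize reliance on the model structure (cf.\ Remark~\ref{rem:str_sat}), an alternative is to argue directly via the homotopy groups using the characterization of weak contractibility in Proposition~\ref{prop:def_contr} and a van Kampen / Mayer--Vietoris type argument for strict \oo-groupoids; verifying the lifting condition of Proposition~\ref{prop:def_contr} for $L(F_p)$ from that of $L(F_{p-1})$, $\Dnt{j_p}$, $\Snt{j_p-1}$ is the real work. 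I would also need to be careful that the filtration of $G_T$ is genuinely by pushouts of boundary inclusions — this is exactly the statement that $\G$ is a direct (skeletal) category and $G_T$ is a finite globular set, so it has a canonical cellular presentation, and $L$ turns it into the desired cellular presentation of $L(G_T)$.
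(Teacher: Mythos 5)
Your skeletal, cell-by-cell decomposition does not work as written, because the induction requires the attaching spheres and the intermediate skeleta to be weakly contractible, and neither is true. The spheres are the basic counterexample: $\Snt{-1}=L(\varnothing)$ is the empty \oo-groupoid, $\Snt{0}=\Dnt{0}\amalg\Dnt{0}$ has two connected components, and $\Snt{1}$ is the free groupoid on two parallel $1$-arrows, whose fundamental group at either object is $\mathbb{Z}$. Your inner induction (``$\Snt{j-1}=\Dnt{j-1}\amalg_{\Snt{j-2}}\Dnt{j-1}$ is contractible because it is a pushout of contractible disks along a contractible sphere'') already fails at the base, since $\Snt{-2}$, resp.\ $\Snt{-1}$, is empty; and in general a homotopy pushout of two contractible objects along a \emph{non}-contractible one is not contractible (think of $D^j\cup_{S^{j-1}}D^j\simeq S^j$ in spaces). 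The same problem breaks the outer induction on the filtration $F_p$: already for $G_T=\Dn{1}$ your filtration passes through the $0$-skeleton, and $L(F_p)=\Snt{0}$ is not weakly contractible, so the statement ``each $L(F_p)$ is weakly contractible'' is false even though the final stage $\Dnt{1}$ is. The gluing lemma you invoke (pushout along a cofibration of a diagram of weakly contractible objects) needs all three corners contractible, and the corner $\Snt{j_p-1}$ never is.

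The paper avoids this by choosing a different decomposition, indexed by the width $k$ of $T$ rather than by cells: write $T=S\amalgd{i'}\Dn{i}$ with $S$ of width $k-1$, so that $L(G_T)$ is the pushout of $L(G_S)\leftarrow\Dnt{i'}\rightarrow\Dnt{i}$, the right-hand map being $\Thtt[i']{i}$. This map is a cofibration (Remark~\ref{rem:Dnt_cof}) between weakly contractible objects (Theorem~\ref{thm:Dnt_contr}), hence a trivial cofibration; its cobase change $L(G_S)\to L(G_T)$ is again a trivial cofibration, and induction on $k$ (base case $L(G_T)=\Dnt{i_1}$) concludes. Note that this uses only the stability of trivial cofibrations under pushout --- no left properness, no gluing lemma, and no claim about spheres. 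If you want to salvage a cell-by-cell filtration, you would have to control the homotopy type of the non-contractible intermediate stages (a van Kampen-type computation), which is substantially harder than the statement being proved; the lesson is to glue whole disks along lower-dimensional disks, not along their boundary spheres.
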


\begin{proof}
In the language of \cite{AraHomGr}, the fact that the $\Snt{n-1} \to \Dnt{n}$
are cofibrations and that the~$\Dnt{n}$'s are weakly contractible (Theorem
\ref{thm:Dnt_contr}) precisely means that the functor 
\[ \G \to \Thtld \to \wgpds \]
is cofibrant and weakly contractible. The result then follows from Proposition
5.11 of loc.~cit.

Let us briefly explain how to prove the result directly. Denote by $k$ the
width of $T$. If~$k = 1$, then $L(G_T) = \Dnt{i}$ for some $i$ and the
result follows from Theorem \ref{thm:Dnt_contr}. If~$k > 1$, then we have $T
= S \amalgd{i'} \Dn{i}$ for some $i, i'$ and some table of dimensions $S$ of
width $k - 1$. It follows that we have a cocartesian square of strict
\oo-groupoids
\[
\xymatrix{
\Dnt{i'} \ar[r] \ar[d]_{\Thtt[i']{i}} & L(G_S)  \ar[d] \\
  \Dnt{i}  \ar[r] & L(G_T) \pbox{.}\\
}
\]
By Remark \ref{rem:Dnt_cof}, the left vertical morphism is a cofibration. Since
$\Dnt{i'}$ and $\Dnt{i}$ are both weakly contractible, this morphism is
actually a trivial cofibration. It follows that the morphism $L(G_S) \to
L(G_T)$ is a trivial cofibration and the result follows by induction on~$k$.
\end{proof}

\begin{rem}
The above proof is our first real use of the Brown-Golasi\'nski model
category structure (see Remark \ref{rem:str_sat}).
\end{rem}

\begin{rem}
Theorem \ref{thm:LGT_contr} can be reformulated by saying that free strict
\oo-groupoids on a globular pasting scheme are weakly contractible.
\end{rem}

\begin{thm}\label{thm:thtld_can_contr}
The globular extension $\Thtld$ is canonically contractible.
\end{thm}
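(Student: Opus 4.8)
The plan is to establish canonical contractibility of $\Thtld$ by verifying the two conditions separately: that every admissible pair admits a lifting, and that this lifting is unique. Recall that an admissible pair $(f,g) : \Dn{n} \to S$ in $\Thtld$ consists of globularly parallel morphisms into a globular sum $S$ of dimension at most $n+1$; by definition of $\Thtld$, such a pair is the same as a pair of parallel $n$-arrows $f, g$ of the strict \oo-groupoid $L(G_S)$, and a lifting is an $(n+1)$-arrow of $L(G_S)$ from $f$ to $g$, i.e., an element of $\Hom_{L(G_S)}(f,g)$.

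For existence, I would invoke Theorem~\ref{thm:LGT_contr}: since $S$ is a globular sum, $G_S$ is a globular pasting scheme and $L(G_S)$ is weakly contractible. By Proposition~\ref{prop:def_contr}, a weakly contractible strict \oo-groupoid is non-empty and has the property that for every $n \ge 1$ and every pair $(u,v)$ of parallel $(n-1)$-arrows there exists an $n$-arrow from $u$ to $v$. Applying this with the parallel $n$-arrows $f, g$ (here one should be slightly careful with the degenerate case $n=0$, where globularly parallel is automatic and one needs a $1$-arrow between two objects, which again is supplied by non-emptiness together with connectedness of $\pi_0$), we obtain an $(n+1)$-arrow $f \to g$, which is the desired lifting. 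This is the step where all the work of Sections~\ref{sec:disks} and~\ref{sec:thtld_can_contr} is used.

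For uniqueness, the point is that in a strict \oo-groupoid, any two parallel $(n+1)$-arrows between the same pair of $n$-arrows are \emph{equal}, not merely homotopic, \emph{provided} one is high enough in the globular set relative to its dimension --- but here the relevant fact is sharper: I would argue that $L(G_S)$, being the free strict \oo-groupoid on a globular set of dimension $\le n+1$ (the dimension of the globular pasting scheme $G_S$ equals the dimension of $S$, which is $\le n+1$), has the property that $\Hom_{L(G_S)}(f,g)$ has at most one element whenever $f,g$ are $n$-arrows. Indeed, the free strict \oo-groupoid on a globular set $X$ has no nondegenerate arrows above the top dimension of $X$: every $(n+2)$-arrow is an identity, so $\pi_{n+1}$ is trivial in the strong sense that parallel $(n+1)$-arrows coincide. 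Combined with the existence just shown, this gives that $\Hom_{L(G_S)}(f,g)$ is a singleton, hence the lifting is unique. This dimension-count is the heart of the argument for the word \emph{canonically}, and it is exactly why the dimension bound was built into the definition of admissible pair (Remark~\ref{rem:def_adm}).

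The main obstacle I anticipate is the uniqueness half: one must show cleanly that $L(G_S)$ has trivial higher cells above dimension $\dim(S) \le n+1$ in the strict (equality, not homotopy) sense. The cleanest route is to recall that $L$ is built from the free strict \oo-category functor by formally inverting arrows, that the free strict \oo-category on a globular set $X$ is graded so that its $m$-arrows for $m > \dim X$ are all identities of $(m-1)$-arrows (a standard fact about polygraphs/computads), and that inverting arrows does not create new cells in degrees where there were only identities. Assembling these observations --- existence from weak contractibility, uniqueness from the dimension bound --- then yields that $\Thtld$ is canonically contractible.
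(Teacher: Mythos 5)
Your overall strategy is the same as the paper's direct argument: existence of a lifting for an admissible pair $(f,g) : \Dn{n} \to S$ comes from the weak contractibility of $L(G_S)$ (Theorem~\ref{thm:LGT_contr} together with Proposition~\ref{prop:def_contr}), and uniqueness is to come from the dimension bound $\dim S \le n+1$, which forces every $(n+2)$-arrow of $L(G_S)$ to be an identity (a fact the paper also uses, and which your polygraph-style justification covers). Your existence step is fine as written --- the paper phrases it via the cofibration $\Snt{n} \to \Dnt{n+1}$ and the trivial fibration $L(G_T) \to \ast$, but invoking Proposition~\ref{prop:def_contr} directly is equivalent, and your worry about $n=0$ is already handled by that proposition.

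There is, however, a genuine gap in your uniqueness half: you infer that parallel $(n+1)$-arrows of $L(G_S)$ \emph{coincide} directly from the fact that every $(n+2)$-arrow is an identity. That implication is false in general. For instance, take the globular set $X$ with two objects and two parallel $1$-dimensional generators $u, v : x \to y$ (so $\dim X = 1 = n+1$ with $n=0$): in $L(X)$ all cells in dimension $\ge 2$ are identities, yet $u$ and $v$ are distinct parallel $1$-arrows. Freeness plus the dimension bound only gives you that \emph{homotopic} $(n+1)$-arrows are equal, not that parallel ones are; the extra ingredient is weak contractibility of $L(G_S)$, used a second time one dimension up. Concretely: given two liftings $h, h' : f \to g$, Proposition~\ref{prop:def_contr} (applicable by Theorem~\ref{thm:LGT_contr}) yields an $(n+2)$-arrow $h \to h'$ in $L(G_S)$, and since $\dim S \le n+1$ this arrow must be an identity, whence $h = h'$. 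This is exactly how the paper concludes, and it is precisely where the dimension clause in the definition of admissible pairs (Remark~\ref{rem:def_adm}) does its work. With that one step restored, your argument is complete and coincides with the paper's proof.
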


\begin{proof}
As we saw in the previous proof, in the language of \cite{AraHomGr}, the
functor $\G \to \wgpds$ is cofibrant and weakly contractible. Since every
strict \oo-groupoid is fibrant in the Brown-Golasi\'nski model category
structure, Proposition 5.12 of loc.~cit.~shows that the globular extension
$\Thtld$ is contractible.

Let us briefly explain how to prove the contractibility directly. Let $(f,
g) : \Dn{n} \to T$ be an admissible pair of $\Thtld$.  Explicitly, $f$ and
$g$ are morphisms of strict \oo-groupoids from~$\Dnt{n}$ to~$L(G_T)$.  Since
$f$ and $g$ are globularly parallel, they can be glued along $\Snt{n-1}$ and
hence define a morphism
\[ (f, g) : \Snt{n} = \Dnt{n} \amalg_{\Snt{n-1}} \Dnt{n} \to L(G_T). \]
One easily checks that a lifting of the pair $(f, g)$ is exactly a morphism
$h : \Dnt{n+1} \to L(G_T)$ making the triangle
\[
\xymatrix{
\Dnt{n+1} \ar[rd]^h \\
\Snt{n} \ar[u] \ar[r]_-{(f, g)} & L(G_T)
}
\]
commute or, in other words, a diagonal filler of the square
\[
\xymatrix{
\Snt{n} \ar[d] \ar[r] & L(G_T) \ar[d] \\
\Dnt{n+1} \ar[r] & \ast \pbox{,}
}
\]
where $\ast$ denotes the terminal strict \oo-groupoid. But such a diagonal
filler exists since the left vertical morphism is a cofibration and the
right vertical morphism is a trivial fibration by the previous theorem and
the fact that every strict \oo-groupoid is fibrant in the Brown-Golasi\'nski
model category structure.

Let us now prove that the globular extension $\Thtld$ is \emph{canonically}
contractible. We have to prove that the lifting $h$ constructed above is
unique. Recall that if $G$ is a strict \oo-groupoid and $k \ge 0$, there is a
canonical bijection between the morphisms \hbox{$\Dnt{k} \to G$} and the $k$-arrows of
$G$. In this proof, we will identify these morphisms with their
corresponding $k$-arrow. Using this identification, a lifting $h$ of the
pair $(f, g)$ is nothing but an $(n+1)$\nbd-arrow $h : f \to g$ of $L(G_T)$.
Let $h' : f \to g$ be a second $(n+1)$-arrow of $L(G_T)$. By the previous
theorem, the \oo-groupoid $L(G_T)$ is weakly contractible. It follows from
Proposition~\ref{prop:def_contr} that there exists an $(n+2)$-arrow $h \to
h'$ in $L(G_T)$. But by definition of an admissible pair, the dimension of
$T$ is at most $n+1$. It follows that $L(G_T)$ has no non-trivial
$(n+2)$-arrows. This shows that $h = h'$, thereby proving the result.
\end{proof}

\begin{rem}\label{rem:thtld_can_contr}
In the above proof, we used our additional condition in the definition of
admissible pairs (see Remark \ref{rem:def_adm}) in an essential way. If we
drop this condition, then the globular extension $\Thtld$ is no longer
\emph{canonically} contractible. Indeed, without this condition, the pair
\[ (\Thst[0]{2}, \Thtt[0]{2}) : \Dnt{0} \to \Dnt{2} \]
would be admissible. Nevertheless,  it has two different liftings, namely
$\Thst{2}$ and~$\Thtt{2}$.
\end{rem}

\section{Fully faithful functors between globular presheaf
categories}\label{sec:ff}

\begin{tparagr}{Precategorical globular extensions}
Let $C$ be a globular extension. A \ndef{precategorical structure}
on $C$ consists of the structure of a co-\oo-precategory on the
co-\oo-graph defined by the functor $\G \to C$. More explicitly, such a
structure is given by morphisms
\[
\begin{split}
\Thn[j]{i} & : \Dn{i} \to \Dn{i} \amalgd{j} \Dn{i},\qquad i > j \ge 0, \\
\Thk{i} & : \Dn{i+1} \to \Dn{i},\qquad i \ge 0,\\
\end{split}
\]
such that
\begin{itemize}
\item for every $i > j \ge 0$, we have
\[ 
\Thn[j]{i}\Ths{i} =
\begin{cases}
    \epsilon^{}_2\Ths{i}, & j = i - 1, \\
    \big(\Ths{i} \amalgd{j} \Ths{i}\big)\Thn[j]{i-1}
    & j < i - 1,
\end{cases}
\]
and
\[
\Thn[j]{i}\Tht{i} =
\begin{cases}
    \epsilon^{}_1\Tht{i}, & j = i - 1, \\
    \big(\Tht{i} \amalgd{j} \Tht{i}\big)\Thn[j]{i-1}
    & j < i - 1,
\end{cases}
\]
where $\epsilon^{}_1,\epsilon^{}_2 : \Dn{i} \to \Dn{i}\amalgd{i-1} \Dn{i}$
denote the canonical morphisms;
\item for every $i \ge 0$, we have
\[ \Thk{i}\Ths{i+1} = \id{\Dn{i}} \quad\text{and}\quad \Thk{i}\Tht{i+1} =
\id{\Dn{i}}\text{.} \]
\end{itemize}
A \ndef{precategorical globular extension} is a globular extension endowed
with a precategorical structure.
\end{tparagr}

\begin{prop}\label{prop:str_precat_ext}
Any contractible globular extension can be endowed with a precategorical
structure. Moreover, if the globular extension is canonically contractible,
then this structure is canonical.
\end{prop}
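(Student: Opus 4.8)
The plan is to build the precategorical structure directly out of the liftings guaranteed by contractibility, following the inductive shape of a co-\oo-precategory. Recall that a precategorical structure asks for morphisms $\Thn[j]{i} : \Dn{i} \to \Dn{i} \amalgd{j} \Dn{i}$ for $i > j \ge 0$ and $\Thk{i} : \Dn{i+1} \to \Dn{i}$ for $i \ge 0$, subject to prescribed source/target compatibilities. The key observation is that each of these morphisms, together with its required boundary data, is exactly the datum of a lifting of an admissible pair: the target object (a globular sum such as $\Dn{i} \amalgd{j} \Dn{i}$, or $\Dn{i}$ itself) has dimension at most $i+1$, which matches the admissibility bound for a pair of morphisms out of $\Dn{i}$.

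First I would construct the $\Thk{i}$'s. For $i \ge 0$, the pair $(\id{\Dn{i}}, \id{\Dn{i}}) : \Dn{i} \to \Dn{i}$ is trivially globularly parallel (both composites with $\Ths{i}$, $\Tht{i}$ agree since the maps are equal), and $\Dn{i}$ is a globular sum of dimension $i \le (i-1)+1$ wait---more carefully, we view this as a pair $\Dn{i} \to \Dn{i}$ and need the target to have dimension $\le i+1$, which holds since $\dim \Dn{i} = i$. Hence the pair is admissible and a lifting is precisely a morphism $\Thk{i} : \Dn{i+1} \to \Dn{i}$ with $\Thk{i}\Ths{i+1} = \id{\Dn{i}} = \Thk{i}\Tht{i+1}$, as required. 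Next I would construct the $\Thn[j]{i}$'s by induction on $i$. For the base case $j = i-1$: the pair $(\ceps{2}\Ths{i},\ceps{1}\Tht{i}) : \Dn{i-1} \to \Dn{i}\amalgd{i-1}\Dn{i}$ is globularly parallel (one checks the source/target conditions using the coglobular relations), the target is a globular sum of dimension $i \le (i-1)+1$, so the pair is admissible; a lifting $\Thn[i-1]{i} : \Dn{i} \to \Dn{i}\amalgd{i-1}\Dn{i}$ has exactly the prescribed boundary. For $j < i-1$: having already defined $\Thn[j]{i-1}$, the pair $\big((\Ths{i}\amalgd{j}\Ths{i})\Thn[j]{i-1},\,(\Tht{i}\amalgd{j}\Tht{i})\Thn[j]{i-1}\big) : \Dn{i-1} \to \Dn{i}\amalgd{j}\Dn{i}$ needs to be checked globularly parallel --- this is where one uses the inductive source/target compatibilities already established for $\Thn[j]{i-1}$ together with the coglobular relations --- and the target $\Dn{i}\amalgd{j}\Dn{i}$ has dimension $i \le (i-1)+1$, so again the pair is admissible and a lifting provides $\Thn[j]{i}$ with the required boundary.

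For the canonical case, when $C$ is canonically contractible each of the liftings used above is unique, so every $\Thk{i}$ and every $\Thn[j]{i}$ is uniquely determined by the (canonically chosen) globular sums and the recursion; hence the whole precategorical structure is canonical. The main obstacle I anticipate is purely bookkeeping: verifying that the pairs appearing in the inductive step for $\Thn[j]{i}$ (with $j < i-1$) are genuinely globularly parallel, i.e.\ that applying $\Ths{i}$ and $\Tht{i}$ to the two candidate morphisms $\Dn{i-1} \to \Dn{i}\amalgd{j}\Dn{i}$ yields equal results. This amounts to chasing the coglobular identities $\Ths{i}\Ths{i-1} = \Tht{i}\Ths{i-1}$ etc.\ through the functoriality of $-\amalgd{j}-$ and the inductive hypothesis on $\Thn[j]{i-1}$; it is routine but must be done carefully to make the inductive construction well-posed. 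Everything else follows mechanically from the definition of admissible pair (in particular the dimension bound, which is exactly tailored to these targets) and from Proposition on contractible extensions supplying (unique) liftings.
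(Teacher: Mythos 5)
Your proposal is correct and follows essentially the same route as the paper: the paper's proof simply observes (delegating the details to Section 3 of \cite{AraHomGr}) that a precategorical structure is precisely a choice of liftings of certain admissible pairs, so contractibility gives existence and canonical contractibility gives uniqueness. Your write-up just makes those admissible pairs explicit --- the pairs $(\id{\Dn{i}},\id{\Dn{i}})$ for the $\Thk{i}$'s and the inductively defined boundary pairs for the $\Thn[j]{i}$'s, with the dimension bound $\le n+1$ checked in each case --- which is exactly the content of the cited paragraph.
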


\begin{proof}
Let $C$ be a globular extension. The choice of a precategorical structure on
$C$ is equivalent to the choice of some liftings  of admissible pairs of $C$
(see Section 3 of \cite{AraHomGr} for details). In particular, if $C$ is
contractible, such a choice can always be made (see paragraph 3.11 of
loc.~cit.) and if $C$ is canonically contractible, this choice is unique.
\end{proof}

\begin{paragr}
Let $C$ be a precategorical globular theory. Then for any globular presheaf
$X$ on~$C$, the underlying globular set of $X$ (which is obtained by
pre-composing by $\G \to C$) is canonically endowed with the structure of
an \oo-precategory. This defines a functor
\[ \Mod{C} \to \wpcat. \]
This functor is easily seen to be faithful.
\end{paragr}

\begin{rem}
This functor can also be described in the following way. Let
$\Theta_{\textrm{pcat}}$ be the universal precategorical globular theory
(its existence can be shown using the globular completion, see paragraph 3.9
of \cite{AraThtld}). By definition of~$\Theta_{\textrm{pcat}}$,
a precategorical structure on a globular theory $C$ defines a globular
functor $\Theta_{\textrm{pcat}} \to C$. This functor is bijective on objects
and thus induces a faithful functor
\[ \Mod{C} \to \Mod{\Theta_{\textrm{pcat}}} \cong \wpcat. \]
\end{rem}

\begin{prop}
Let $C$ be a precategorical globular theory. For any globular functor from
$C$ to $\Thtld$, the induced functor
\[
\wgpds \cong \Mod{\Thtld} \to \Mod{C}
\]
is fully faithful.
\end{prop}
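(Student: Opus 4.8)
The plan is to factor the functor $\rho^{*} \colon \wgpds \cong \Mod{\Thtld} \to \Mod{C}$ --- restriction along the given globular functor $\rho \colon C \to \Thtld$ --- through the canonical faithful functor $\Mod{C} \to \wpcat$, to identify the resulting composite with the full inclusion $\wgpds \hookrightarrow \wpcat$, and then to deduce full faithfulness of $\rho^{*}$ by a formal argument. The only step requiring input specific to $\Thtld$ is the first one: observing that \emph{every} globular functor $\rho \colon C \to \Thtld$ is automatically a morphism of precategorical globular theories. Indeed, by Theorem~\ref{thm:thtld_can_contr} the globular extension $\Thtld$ is canonically contractible, hence by Proposition~\ref{prop:str_precat_ext} it carries a \emph{unique} precategorical structure. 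Applying $\rho$ to the structure morphisms $\Thn[j]{i}$, $\Thk{i}$ of $C$ produces morphisms of $\Thtld$ with the same sources and targets satisfying the same relations --- since $\rho$, being a functor under $\G$ that preserves globular sums, fixes all the canonical morphisms ($\Ths{i}$, $\Tht{i}$, and the two canonical injections into $\Dn{i}\amalgd{i-1}\Dn{i}$) occurring in the axioms. These images therefore constitute a precategorical structure on $\Thtld$, which by uniqueness must be the canonical one; so $\rho$ commutes with the canonical globular functors out of the universal precategorical globular theory $\Theta_{\textrm{pcat}}$, i.e.\ the triangle $\Theta_{\textrm{pcat}} \to C \xrightarrow{\rho} \Thtld$ commutes.

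Applying $\Mod{-}$ to this triangle and using the equivalences $\Mod{\Theta_{\textrm{pcat}}} \cong \wpcat$ and $\Mod{\Thtld} \cong \wgpds$ (Proposition~\ref{prop:glob_pref_thtld}), one obtains that $\rho^{*}$ followed by the faithful functor $\Mod{C} \to \wpcat$ coincides with the canonical functor $\wgpds \cong \Mod{\Thtld} \to \wpcat$. It then remains to see that this last functor is the inclusion of $\wgpds$ as a full subcategory of $\wpcat$ (through the chain of full subcategories $\wgpds \subset \wcat \subset \wpcat$). For this, observe that the unique precategorical structure of $\Thtld$ must have $\Thn[j]{i}$ equal, as a morphism $\Dnt{i} \to \Dnt{i} \amalg_{\Dnt{j}} \Dnt{i}$ of strict \oo-groupoids, to the one classifying the $j$\nbd-composite of the two generating $i$-arrows, and $\Thk{i}$ equal to the one classifying the identity of the generating $i$-arrow of $\Dnt{i}$ --- because these data do form a precategorical structure and there is only one. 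Hence, for a strict \oo-groupoid $X$, precomposition by the $\Thn[j]{i}$'s and $\Thk{i}$'s reconstructs exactly the composition and identity operations of $X$; the $\infty$-precategory structure induced on the underlying globular set of $X$ is therefore its own strict \oo-groupoid structure, so the functor $\wgpds \to \wpcat$ is the full inclusion.

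Finally, writing the composite as $G \circ \rho^{*}$, where $G \colon \Mod{C} \to \wpcat$ is faithful and $G \circ \rho^{*} \colon \wgpds \hookrightarrow \wpcat$ is fully faithful, full faithfulness of $\rho^{*}$ follows by a standard diagram chase: $\rho^{*}$ is faithful since $G \circ \rho^{*}$ is; and given $\phi \colon \rho^{*}X \to \rho^{*}Y$ in $\Mod{C}$, the morphism $G(\phi) \colon X \to Y$ of $\wpcat$ equals $G(\rho^{*}\psi)$ for a unique $\psi \colon X \to Y$ in $\wgpds$ by fullness of the inclusion, whence $\rho^{*}\psi = \phi$ by faithfulness of $G$.

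The main obstacle is thus the first step: recognising that an \emph{arbitrary} globular functor $C \to \Thtld$ respects the precategorical operations. This is exactly where the \emph{canonical} contractibility of $\Thtld$ (Theorem~\ref{thm:thtld_can_contr}), rather than mere contractibility, is essential; everything afterwards is formal.
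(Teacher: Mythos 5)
Your proof is correct and follows essentially the same route as the paper: factor the restriction functor through the faithful functor $\Mod{C} \to \wpcat$, identify the composite with the (fully faithful) forgetful functor $\wgpds \to \wpcat$, and use the uniqueness of the precategorical structure on $\Thtld$ (canonical contractibility) to see that any globular functor $C \to \Thtld$ respects the precategorical structures. You merely make explicit a few steps the paper leaves implicit, such as why the image of the structure of $C$ must coincide with the canonical structure of $\Thtld$ and the final formal deduction of full faithfulness.
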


\begin{proof}
By Proposition \ref{prop:str_precat_ext} and
Theorem~\ref{thm:thtld_can_contr}, the globular extension $\Thtld$ is
endowed with a canonical precategorical structure. It follows that the
functor $C \to \Thtld$ respects the precategorical structures. This implies
that the triangle
\[
\xymatrix@C=.5pc{
\Mod{\Thtld} \ar[dr] \ar[rr] & & \Mod{C} \ar[dl] \\
& \wpcat \pbox{,}
}
\]
where the two oblique arrows are given by the respective precategorical
structures of~$\Thtld$ and $C$, is commutative. Moreover, the
functor $\wgpds \cong \Mod{\Thtld} \to \wpcat$ is nothing but the forgetful
functor and is hence fully faithful. The result then follows from the fact
that the functor $\Mod{C} \to \wpcat$ is faithful.
\end{proof}

\begin{coro}\label{coro:abs_ff}
Let $C$ be a contractible globular theory. For any globular functor from~$C$
to $\Thtld$, the induced functor
\[
\wgpds \cong \Mod{\Thtld} \to \Mod{C}
\]
is fully faithful.
\end{coro}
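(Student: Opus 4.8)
The plan is to reduce Corollary~\ref{coro:abs_ff} to the previous proposition. First I would invoke Proposition~\ref{prop:str_precat_ext}: since $C$ is a contractible globular theory, it can be endowed with a precategorical structure, making it a precategorical globular theory. (Note we do not need the \emph{canonical} version here; any choice of precategorical structure will do.) This brings us into the setting of the previous proposition.

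Next, given a globular functor $F : C \to \Thtld$, I would like to apply the previous proposition directly. The only subtlety is that the previous proposition is stated for arbitrary globular functors from a \emph{precategorical} globular theory to $\Thtld$, and its proof does not actually require $F$ to respect the precategorical structures on the nose --- the commutativity of the triangle
\[
\xymatrix@C=.5pc{
\Mod{\Thtld} \ar[dr] \ar[rr] & & \Mod{C} \ar[dl] \\
& \wpcat
}
\]
is what is used, and that follows from $\Thtld$ carrying a \emph{canonical} precategorical structure (Proposition~\ref{prop:str_precat_ext} together with Theorem~\ref{thm:thtld_can_contr}), so that $F$ automatically respects the precategorical structures. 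Hence the previous proposition applies verbatim to the precategorical globular theory obtained from $C$, and the induced functor $\wgpds \cong \Mod{\Thtld} \to \Mod{C}$ is fully faithful.

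The main obstacle --- really the only thing to check --- is that the functor $\Mod{C} \to \wpcat$ attached to \emph{this} chosen precategorical structure on $C$ is compatible with $F$, i.e.\ that the triangle above commutes for \emph{some} precategorical structure on $C$. This is exactly the point exploited in the proof of the previous proposition: because the precategorical structure on $\Thtld$ is canonical, $F : C \to \Thtld$ preserves precategorical structures regardless of which structure we picked on $C$, so the triangle commutes. I would therefore write the proof of the corollary as a one-line appeal: endow $C$ with a precategorical structure via Proposition~\ref{prop:str_precat_ext}, and apply the previous proposition.

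\begin{proof}
By Proposition \ref{prop:str_precat_ext}, the contractible globular theory
$C$ can be endowed with a precategorical structure. The result then follows
from the previous proposition.
\end{proof}
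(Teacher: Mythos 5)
Your proof is correct and is essentially identical to the paper's: the paper likewise endows $C$ with a precategorical structure via Proposition~\ref{prop:str_precat_ext} and then applies the previous proposition, with the compatibility of the triangle already handled in that proposition's proof via the canonicity of the precategorical structure on $\Thtld$. Your extra discussion of that subtlety is accurate but not needed beyond the one-line argument you give at the end.
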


\begin{proof}
By Proposition \ref{prop:str_precat_ext}, $C$ can be endowed with a
precategorical structure. The result then follows from the previous
proposition.
\end{proof}

\section{Strict $\infty$-groupoids are Grothendieck $\infty$-groupoids}

\begin{thm}\label{thm:strweak}
Let $C$ be a coherator. There exists a canonical functor
\[ \wgpds \to \wgpdC{C}. \]
Moreover, this functor is fully faithful.
\end{thm}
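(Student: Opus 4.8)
The plan is to assemble the theorem from the pieces developed in the previous sections. First I would produce the functor. By Theorem~\ref{thm:thtld_can_contr}, the globular extension $\Thtld$ is (canonically) contractible, hence in particular contractible. Since a coherator $C$ is by definition a free globular extension which is contractible, Proposition~\ref{prop:coh_weak_init} applies: as $\Thtld$ is canonically contractible and a globular theory, there exists a \emph{unique} globular functor $C \to \Thtld$. Precomposition with this functor sends globular presheaves on $\Thtld$ to globular presheaves on $C$ (by the last remark of paragraph~\ref{paragr:glob_pref}), and using the canonical equivalence $\wgpds \cong \Mod{\Thtld}$ of Proposition~\ref{prop:glob_pref_thtld}, this yields a functor
\[ \wgpds \cong \Mod{\Thtld} \to \Mod{C} = \wgpdC{C}. \]
This functor is canonical because the globular functor $C \to \Thtld$ it comes from is unique and the equivalence $\wgpds \cong \Mod{\Thtld}$ is canonical.

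Second I would prove full faithfulness. The point is that a coherator is in particular a contractible globular theory, so Corollary~\ref{coro:abs_ff} applies directly to the globular functor $C \to \Thtld$ just constructed: the induced functor $\wgpds \cong \Mod{\Thtld} \to \Mod{C}$ is fully faithful. This is precisely the functor built in the first step, so we are done.

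I do not expect any real obstacle here: the theorem is a formal consequence of the results already in place, and the proof is essentially a matter of citing Theorem~\ref{thm:thtld_can_contr}, Proposition~\ref{prop:coh_weak_init}, Proposition~\ref{prop:glob_pref_thtld}, and Corollary~\ref{coro:abs_ff} in the right order. The only mild subtlety worth spelling out is the word ``canonical'': one should note explicitly that the functor does not depend on any choices, which follows from the \emph{uniqueness} clause in Proposition~\ref{prop:coh_weak_init}(2), itself resting on the fact that $\Thtld$ is canonically — not merely — contractible (Theorem~\ref{thm:thtld_can_contr}), which in turn relies on the additional dimension condition in the definition of admissible pairs (Remark~\ref{rem:def_adm}). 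So if I were writing the proof at full length I would make that dependency chain visible; otherwise the argument is a two-line citation.
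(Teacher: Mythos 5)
Your proof is correct and follows essentially the same route as the paper: invoke Theorem~\ref{thm:thtld_can_contr} and Proposition~\ref{prop:coh_weak_init} to get the unique globular functor $C \to \Thtld$, pass to globular presheaves via $\wgpds \cong \Mod{\Thtld}$, and conclude full faithfulness from Corollary~\ref{coro:abs_ff}. Your extra remarks on why the functor is canonical (uniqueness via canonical contractibility, resting on the dimension condition in the definition of admissible pairs) are accurate and consistent with the paper's Remarks~\ref{rem:def_adm} and~\ref{rem:thtld_can_contr}.
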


\begin{proof}
By Theorem \ref{thm:thtld_can_contr}, the globular theory $\Thtld$ is
canonically contractible. It follows from
Proposition~\ref{prop:coh_weak_init} that there exists a unique globular
functor $C \to \Thtld$. This globular functor induces a functor
\[ \wgpds \cong \Mod{\Thtld} \to \Mod{C} = \wgpdC{C}, \]
thereby proving the first assertion. The second assertion follows
immediately from Corollary~\ref{coro:abs_ff}.
\end{proof}

\bibliographystyle{amsplain}
\bibliography{biblio}

\newcommand{\noopsort}[1]{}
\providecommand{\bysame}{\leavevmode\hbox to3em{\hrulefill}\thinspace}
\providecommand{\MR}{\relax\ifhmode\unskip\space\fi MR }
\providecommand{\MRhref}[2]{%
  \href{http://www.ams.org/mathscinet-getitem?mr=#1}{#2}
}
\providecommand{\href}[2]{#2}
\begin{thebibliography}{10}

\bibitem{AraThesis}
D.~Ara, \emph{Sur les {$\infty$}-groupoïdes de {G}rothendieck et une variante
  {$\infty$}-catégorique}, Ph.D. thesis, Université Paris 7, 2010.

\bibitem{AraThtld}
\bysame, \emph{The groupoidal analogue {$\widetilde{\Theta}$} to {J}oyal's
  category {$\Theta$} is a test category}, Appl. Categ. Structures \textbf{20}
  (2012), no.~6, 603--649.

\bibitem{AraTypHomStr}
\bysame, \emph{Sur les types d'homotopie modélisés par les
  {$\infty$}-groupoïdes stricts}, preprint, 2012.

\bibitem{AraHomGr}
\bysame, \emph{On the homotopy theory of {G}rothendieck {$\infty$}-groupoids},
  J. Pure Appl. Algebra \textbf{217} (2013), no.~7, 1237--1278.

\bibitem{AraMetBrown}
D.~Ara and F.~Métayer, \emph{The {B}rown-{G}olasi\'nski model structure on
  strict {$\infty$}-groupoids revisited}, Homology Homotopy Appl. \textbf{13}
  (2010), no.~1, 121--142.

\bibitem{BataninWCat}
M.~A. Batanin, \emph{Monoidal globular categories as a natural environment for
  the theory of weak {$n$\nobreakdash-}categories}, Adv. Math. \textbf{136}
  (1998), no.~1, 39--103.

\bibitem{BergerNerve}
C.~Berger, \emph{A cellular nerve for higher categories}, Adv. Math.
  \textbf{169} (2002), no.~1, 118--175.

\bibitem{BrownGolas}
R.~Brown and M.~Golasi{\'n}ski, \emph{A model structure for the homotopy theory
  of crossed complexes}, Cahiers de Topologie et G{\'e}om{\'e}trie
  Diff{\'e}rentielle Cat{\'e}goriques \textbf{30} (1989), no.~1, 61--82.

\bibitem{BrownHigginsGpdCC}
R.~Brown and P.~J. Higgins, \emph{The equivalence of {$\infty $}-groupoids and
  crossed complexes}, Cahiers Topologie G\'eom. Diff\'erentielle \textbf{22}
  (1981), no.~4, 371--386.

\bibitem{BrownHigginsClass}
\bysame, \emph{The classifying space of a crossed complex}, Math. Proc.
  Cambridge Philos. Soc. \textbf{110} (1991), no.~1, 95--120.

\bibitem{Cisinski}
D.-C. Cisinski, \emph{Les pr\'efaisceaux comme mod\`eles des types
  d'homotopie}, Ast\'erisque, no. 308, Soc. Math. France, 2006.

\bibitem{GPSTricat}
R.~Gordon, A.~J. Power, and R.~Street, \emph{Coherence for tricategories}, Mem.
  Amer. Math. Soc. \textbf{117} (1995), no.~558, vi+81.

\bibitem{GrothPS}
A.~Grothendieck, \emph{Pursuing stacks}, Manuscript, 1983, to be published in
  Documents Mathématiques.

\bibitem{LafMetPol}
Y.~Lafont and F.~M{\'e}tayer, \emph{Polygraphic resolutions and homology of
  monoids}, J. Pure Appl. Algebra \textbf{213} (2009), no.~6, 947--968.

\bibitem{LMW}
Y.~Lafont, F.~M\'etayer, and K.~Worytkiewicz, \emph{A folk model structure on
  omega-cat}, Adv. Math. \textbf{224} (2010), 1183--1231.

\bibitem{MaltsiGr}
G.~Maltsiniotis, \emph{Infini groupo\"ides non stricts, d'après
  {G}rothendieck}, preprint, \noopsort{a}2007.

\bibitem{MaltsiCat}
\bysame, \emph{Infini cat\'egories non strictes, une nouvelle d\'efinition},
  preprint, \noopsort{b}2007.

\bibitem{MaltsiGrCat}
\bysame, \emph{Grothendieck {$\infty$}-groupoids and still another definition
  of {$\infty$}-categories}, arXiv:1009.2331v1 [math.CT], \noopsort{c}2010.

\bibitem{MetPolRes}
F.~Métayer, \emph{Resolutions by polygraphs}, Theory Appl. Categ. \textbf{11}
  (2003), No. 7, 148--184.

\bibitem{SimpsonHTHC}
C.~Simpson, \emph{Homotopy theory of higher categories}, New mathematical
  monographs, vol.~19, Cambridge University Press, 2011.

\end{thebibliography}

\end{document}